\newcounter{minutes}\setcounter{minutes}{\time}
\newcounter{hours}\setcounter{hours}{\time}
\title
[Local convexity of metric balls ]
{Local convexity of metric balls}
\author{Parisa Hariri}
\address{Department of Mathematics and Statistics\\
  University of Turku\\ Turku, Finland}
\email{parisa.hariri@utu.fi}
\author{Riku Kl\'en }
\address{Department of Mathematics and Statistics\\
  University of Turku\\ Turku, Finland}
\email{riku.klen@utu.fi}
\author{Matti Vuorinen}
\address{Department of Mathematics and Statistics\\
  University of Turku\\ Turku, Finland}
\email{vuorinen@utu.fi}
\keywords{triangular ratio metric ball, visual angle metric, local convexity}\subjclass[2010]{51M10(52A20)}
\dedicatory{}
\theoremstyle{plain}
\newtheorem{thm}[equation]{Theorem}
\newtheorem{cor}[equation]{Corollary}
\newtheorem{lem}[equation]{Lemma}
\newtheorem{prop}[equation]{Proposition}
\newtheorem{conjecture}[equation]{Conjecture}
\theoremstyle{definition}
\newtheorem{defn}[equation]{Definition}
\theoremstyle{remark}
\newtheorem{rem}[equation]{Remark}
\newtheorem{nonsec}[equation]{}
\numberwithin{equation}{section}
\newcommand{\beq}{\begin{equation}}
\newcommand{\eeq}{\end{equation}}
\newcommand{\bequu}{\begin{eqnarray*}}
\newcommand{\eequu}{\end{eqnarray*}}
\newcommand{\bequ}{\begin{eqnarray}}
\newcommand{\eequ}{\end{eqnarray}}
\newcommand{\B}{\mathbb{B}^2}
\newcommand{\UH}{{\mathbb{H}^2}}
\newcommand{\BB}{\mathbb{B}^2}
\newcommand{\Hn}{ {\mathbb{H}^n} }
\newcommand{\Rn}{ {\mathbb{R}^n} }
\renewcommand{\Im}{{ \rm Im}\,}
\renewcommand{\Re}{{ \rm Re}\,}
\begin{document}

\def\thefootnote{}
\footnotetext{ \texttt{File:~\jobname .tex,
           printed: \number\year-\number\month-\number\day,
           \thehours.\ifnum\theminutes<10{0}\fi\theminutes}
} \makeatletter\def\thefootnote{\@arabic\c@footnote}\makeatother

\begin{abstract} We study local convexity properties of the triangular ratio metric balls in proper subdomains of the real coordinate space. We also study inclusion properties of the visual angle metric balls and related hyperbolic type metric balls in the complement of the origin and the upper half space.
\end{abstract}

\maketitle

\section{Introduction}\label{section1}


The hyperbolic metric has become an important tool in geometric function theory. It works well in simply connected subdomains of the complex plane, because we can use the Riemann mapping theorem to map such domains onto the unit disk, where explicit formulas are known \cite{b1,kl}. In higher dimensions ($n\ge 3$) no counterpart exists and thus there is need for other methods. One approach is to generalize the hyperbolic metric for higher dimensions in such a manner that the generalized metric is comparable with the hyperbolic metric when the domain is the upper half plane or the unit ball
$${\mathbb{H}^n} = \{ (x_1,\ldots,x_n)\in {\mathbb{R}^n}:  x_n>0 \}\,, \quad {\mathbb{B}^n}= \{ z\in {\mathbb{R}^n}: |z|<1 \}\,. $$
We call these generalizations hyperbolic type metrics. One of the first hyperbolic type metrics, the quasihyperbolic metric,
was introduced by Gehring and Palka in the 1970's \cite{gp}. Soon the quasihyperbolic metric
found numerous applications and nowadays it is a standard tool in geometric function theory, see e.g. \cite{gh}. During the past two decades many authors have introduced various other hyperbolic type metrics, \cite{himps}, \cite{klvw}, \cite{b}, \cite{s}, \cite{hmm}.

However, it is not clear which one of these hyperbolic type metrics is preferable for a specific application.
The natural line of research in this situation is to compare the geometries defined by two hyperbolic type metrics to each other. In this paper we study so called triangular ratio metric or $s$-metric, which is defined as follows for a domain $G \subset   \mathbb{R}^n$ and
$x,y \in G$:
\begin{equation}\label{sm}
s_G(x,y)=\sup_{z\in \partial G}\frac{|x-y|}{|x-z|+|z-y|}\in [0,1]\,.
\end{equation}
This metric has been studied in \cite{chkv,hklv}.
For a metric space $(G,m)$ we define the metric
ball for $x\inG$ and $r > 0$ by $B_m(x, r) = \{y\in G: m(x, y) < r\}\,.$
We study the metric balls defined by the triangular ratio metric.
The behaviour of a metric can be studied in many different ways. Our goal is to examine the geometric properties of the metric space $(G,s_G)$ by discussing local convexity properties of the metric balls $B_{s_G}(x,r)\,.$
We prove  that $B_{s_G}(x,r)\,, r \in(0,1],$
is always starlike with respect to $x\,$ and find the best constant $r_0$ such that $B_{s_G}(x,r)\,,$
$r\in(0,r_0)\,,$ is convex.
Similar local convexity results for other hyperbolic type metrics can be found in \cite{hklv,k2,k3,k4,krt,hps}. We also compare different hyperbolic type metrics by considering the inclusion of metric balls. For some other hyperbolic type metrics similar research is carried out in \cite{kv1,kv2}.

We next formulate our main results. 

\begin{thm}\label{stars}
Let $G\subsetneq\mathbb{R}^n$ be a domain. Then for all $z\in G$ and all $r\in (0,1]$, the $s$-ball $B_s(z,r)$ is Euclidean starlike with respect to $z\,.$
\end{thm}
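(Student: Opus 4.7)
The plan is to show that for every $y\in B_s(z,r)$ and every $t\in[0,1]$ the point $y_t:=(1-t)z+ty$ again lies in $B_s(z,r)$; this is exactly Euclidean starlikeness with respect to $z$. Because $s_G$ is defined as a supremum over $\partial G$, the natural strategy is to establish the pointwise inequality
\[
\frac{|z-y_t|}{|z-w|+|w-y_t|}\;\le\;\frac{|z-y|}{|z-w|+|w-y|}
\]
for every $w\in\partial G$ and then pass to the supremum on both sides.

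The cases $t=0$ (where $y_0=z$) and $t=1$ (where $y_1=y$) are immediate, so fix $t\in(0,1)$ and $w\in\partial G$. Since $y_t$ lies on the segment from $z$ to $y$, we have $|z-y_t|=t|z-y|$, and clearing denominators reduces the displayed inequality to the scalar bound
\[
|w-y_t|\;\ge\;t|w-y|-(1-t)|z-w|.
\]
If the right-hand side is non-positive there is nothing to prove; otherwise I would chain two applications of the triangle inequality, namely $|w-y_t|\ge|w-y|-|y-y_t|=|w-y|-(1-t)|z-y|$, together with $|z-y|\le|z-w|+|w-y|$. A short rearrangement shows that the slack in this chain is exactly $(1-t)\bigl(|w-y|+|z-w|-|z-y|\bigr)\ge 0$, i.e.\ the standard triangle inequality rescaled by the non-negative factor $1-t$.

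With the pointwise inequality in hand, taking the supremum over $w\in\partial G$ yields the stronger radial monotonicity $s_G(z,y_t)\le s_G(z,y)<r$, so $y_t\in B_s(z,r)$ and the ball is starlike with respect to $z$. The whole argument rests only on the Euclidean triangle inequality and is insensitive to the dimension $n$ or to the regularity of $\partial G$. The only real obstacle is identifying the correct combination of triangle inequalities, which becomes transparent once the target inequality is written out and one works backwards from it; no geometric construction, compactness argument, or case split on the position of $w$ is needed.
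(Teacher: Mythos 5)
Your proof is correct and follows essentially the same route as the paper: both establish the radial monotonicity $s_G(z,y_t)\le s_G(z,y)$ by proving a pointwise inequality for each fixed boundary point $w$ using only the Euclidean triangle inequality, and then pass to the supremum over $w\in\partial G$. The only detail worth adding is the paper's one-line observation that $s_G(z,y)<r\le 1$ forces $[z,y]\subset G$, so that $y_t$ actually lies in the domain and hence genuinely belongs to the ball $B_s(z,r)\subset G$.
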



In addition to the $s$-metric, we also study other metrics defined on subdomains of $\mathbb{R}^n\,,$ for example the quasihyperbolic metric $k_G$, the distance ratio metric $j_G$, the visual angle metric $v_G\,,$ and the point pair function $p_G$ which are defined in Section \ref{section5}. For these metrics we summarize our results in the following theorems.

\begin{thm}\label{thm:main3}
Let $x\in\mathbb{R}^n\setminus\{0\}\,,$ $r\in (0,r_m]\,,$ and $m\in \{j,k,|\cdot|,p,q,s\}\,.$ Then we can find the best possible radius $t=t(r)$ such that $B_m(x,t)\subset B_v(x,r)\,.$
\end{thm}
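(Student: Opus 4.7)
The plan is to use the rotational symmetry of the setup. In $G = \mathbb{R}^n \setminus \{0\}$ the only boundary point is the origin, so the visual angle metric reduces to $v_G(x,y) = \angle(x,0,y)$ and the ball $B_v(x,r)$ is the open circular cone with apex $0$, axis along the ray from $0$ through $x$, and half-opening $r$. Every metric in the list $\{j,k,|\cdot|,p,q,s\}$ is a function of the three scalars $|x|,|y|,|x-y|$ only, so each is invariant under the rotations of $\mathbb{R}^n$ that fix this axis. Together with the rotational symmetry of $B_v(x,r)$ itself, this reduces the problem to the $2$-plane spanned by $0$ and $x$; after normalising I may take $x=(|x|,0)$ in this plane.

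The best radius is then
\begin{equation*}
t(r) = \inf\{m(x,y) : y\in G,\ \angle(x,0,y)\ge r\},
\end{equation*}
because the inclusion $B_m(x,t)\subset B_v(x,r)$ is equivalent to $m(x,y)\ge t$ for all $y$ with $\angle(x,0,y)\ge r$. By continuity of $m(x,\cdot)$ on $G$, this infimum equals the infimum over the cone boundary, and by reflection symmetry across the axis it is enough to minimise over the one-sided ray $y(\rho)=\rho(\cos r,\sin r)$, $\rho>0$. Substituting $|y(\rho)|=\rho$ and $|x-y(\rho)|^2=|x|^2+\rho^2-2|x|\rho\cos r$ into the closed-form expression for each metric converts the task into a single-variable calculus problem: minimise $m(x,y(\rho))$ over $\rho>0$. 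For $m=|\cdot|$ the answer is the perpendicular foot distance $t(r)=|x|\sin r$; for $m\in\{s_G,j_G,p_G\}$ standard differentiation produces explicit algebraic formulas for the minimising $\rho$; for $m=k_G$ I would exploit the identity $k_G(x,y)=\sqrt{\alpha^2+(\log(|y|/|x|))^2}$ with $\alpha=\angle(x,0,y)$, which reduces the minimisation to an elementary one in $\rho$; the case $m=q_G$ proceeds analogously from its closed form.

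The main obstacle will be verifying, case by case, that the critical point produced by differentiation is the global minimum, and hence that $t(r)$ is sharp. For the unbounded metrics $j_G$ and $k_G$ the behaviour $m(x,y(\rho))\to\infty$ as $\rho\to 0^+$ and as $\rho\to\infty$ guarantees an interior minimiser. For the bounded or less monotone metrics $s_G$, $p_G$ and $q_G$, one has to argue that $m(x,y(\rho))$ does not descend below the candidate value along the cone boundary as $\rho$ approaches the degenerate limits, so that the attained minimum really certifies a strict inclusion $B_m(x,t(r))\subset B_v(x,r)$. Once the optimal $\rho_0$ is identified for a given $m$, sharpness is automatic: the point $y(\rho_0)$ lies on $\partial B_v(x,r)$, so any radius exceeding $m(x,y(\rho_0))$ places $y(\rho_0)$ inside $B_m(x,t)$ while keeping it outside $B_v(x,r)$. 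A secondary subtlety is that the minimiser is generally not the perpendicular foot of $x$ onto the ray; for metrics that weight the distance to the origin it is shifted toward $0$ along the ray, and this shift is what produces the individual explicit values of $t(r)$ that the theorem records.
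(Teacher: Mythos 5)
Your proposal matches the paper's proof in essence: the paper establishes that $B_v(x,r)$ in $\mathbb{R}^n\setminus\{0\}$ is an angular domain with vertex at $0$ (Proposition \ref{vpun}) and then, for each metric in the list, minimises $m(x,y)$ over the boundary ray $y=te^{ir}$ by single-variable calculus, finding the critical point at $|y|=|x|$ and reading off sharpness from the minimiser lying on $\partial B_v(x,r)$ --- exactly your reduction. The only detail you gloss over is that for $m=|\cdot|$ and $m=q$ the answer splits into cases (e.g.\ $R=|x|\sin r$ only for $r\le\pi/2$, and $R=|x|$ for larger $r$, since the perpendicular foot then leaves the ray), which the paper handles explicitly in Lemmas \ref{nv} and \ref{qv}.
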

\begin{thm}\label{thm:main4}
Let $x\in \mathbb{H}^n\,$ and $r\in (0,\pi/2]\,.$ Then
\bequu
B^n(x+(\sec^2 r-1) x_n e_n, (\tan r) x_n)&\subset & B_v(x,r)\\
& \subset &  B^n \left( x+ (2x_n \tan^2 r)  e_n, 2x_n \frac{\tan r}{\cos r}  \right)\,,
\eequu
and the Euclidean balls are the best possible. Moreover
\[
 B^n(x,x_n \sin r) \subset B_v(x,r)\subset B^n \left( x, 2x_n\left( \frac{\tan r}{\cos r}+\tan^2 r\right)  \right)\,.
 \]
\end{thm}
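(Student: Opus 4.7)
The plan is to reduce to a 2D problem by rotational symmetry, derive an explicit boundary equation for $B_v(x,r)$ via the tangent-circle characterization of visual angles, and then verify each Euclidean inclusion by a polynomial factorization. By a horizontal translation I may assume $x = x_n e_n$, and since $v_{\mathbb{H}^n}$ together with all Euclidean balls appearing in the theorem are rotationally symmetric about the $e_n$-axis through $x$, it suffices to work in the $(y_1,y_n)$-half-plane with $y_1 \ge 0$. The key starting identity comes from observing that $\sup_{z\in \partial\mathbb{H}^n}\angle(x,z,y)$ is attained at the tangency point $z$ of the unique circle through $x$ and $y$ tangent to $\partial\mathbb{H}^n$ from above: using the tangent-circle relations $|x-z|^2 = 2 x_n R$ and $|y-z|^2 = 2 y_n R$ for the tangent circle of radius $R$, the inscribed-angle formula $|x-y| = 2R\sin v(x,y)$, and the law of cosines in triangle $xzy$, I obtain
\[
|x-y|\sin v + 2\sqrt{x_n y_n}\cos v = x_n + y_n \qquad (\ast)
\]
at $v = v(x,y)$. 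Squaring $(\ast)$ with $v = r$ produces $y_1^2\sin^2 r = \bigl((x_n+y_n)\cos r - 2\sqrt{x_n y_n}\bigr)^2$, from which $\partial B_v(x,r)$ is a concave graph $|y_1| = F(y_n)$ meeting $y_1 = 0$ at the two points $y_n = x_n(\sec r \pm \tan r)^2$; in particular, $B_v(x,r)$ is convex.

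For the inner inclusion, convexity of $B_v(x,r)$ reduces the problem to verifying $(\ast)$-with-$v=r$ as a $\le$ inequality on the sphere $\partial B^n(x+(\sec^2 r - 1)x_n e_n,\, x_n\tan r)$. After rescaling to $x_n = 1$ and setting $t = \sqrt{y_n}$, this becomes the polynomial inequality
\[
\sin^2 r\bigl[\tan^2 r - (t^2 - \sec^2 r)^2\bigr] \le \bigl((1+t^2)\cos r - 2t\bigr)^2,
\]
and the difference of the two sides factors exactly as $(t-\sec r)^2\bigl(t^2 + (2\sec r - 4\cos r)\,t + \sec^2 r - 2\sin^2 r\bigr)$, whose quadratic factor has discriminant $-8\sin^2 r \le 0$. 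Hence the inequality holds, with equality precisely at $t = \sec r$, i.e., at the tangency points $y = (\pm x_n\tan r, 0,\ldots,0, x_n\sec^2 r)$ (and their rotations in $n$ dimensions); this gives both the inclusion and its sharpness.

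For the outer inclusion, I use the convexity of $B^n(x + 2x_n\tan^2 r \cdot e_n,\, 2x_n\tan r/\cos r)$ together with the boundary parametrization of $B_v(x,r)$. Writing $\alpha = \sec r + \tan r$ and $\beta = \sec r - \tan r$ (so $\alpha\beta = 1$ and $\alpha + \beta = 2\sec r$), the boundary is $y_1^2\sin^2 r = \cos^2 r\,(t-\alpha)^2(t-\beta)^2$ in terms of $t = \sqrt{y_n/x_n}$, and the outer ball is described by $y_1^2 \le x_n^2(\alpha^2 - t^2)(t^2 - \beta^2)$. After dividing through by the positive factor $(\alpha - t)(t - \beta)$, the inclusion reduces to the quadratic inequality
\[
t^2 - 2\sec r\cos(2r)\,t + 1 \ge 0 \quad \text{for all } t \in [\beta,\alpha].
\]
Its discriminant $4(\sec^2 r\cos^2(2r) - 1)$ is non-positive when $r \le \pi/3$, so the quadratic is non-negative on all of $\mathbb{R}$; when $r > \pi/3$ both roots are real but have product $1$ and sum $2\sec r\cos(2r) < 0$, so both roots are negative and the quadratic is positive on $(0,\infty) \supset [\beta,\alpha]$. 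The tangencies at $t = \alpha, \beta$ (the top and bottom of $B_v(x,r)$ on the vertical axis) realize the $e_n$-diameter of $B_v(x,r)$, showing sharpness of the outer ball.

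For the $x$-centered balls in the ``moreover'' part, the polar representation $\rho(\theta) = x_n\sin r/\cos^2((\theta+r)/2)$ of $\partial B_v(x,r)$ around $x$ attains its minimum $x_n\sin r$ at $\theta = -r$ and its maximum $2 x_n(\tan r/\cos r + \tan^2 r)$ at $\theta = \pi/2$, which immediately yields the two stated balls. The main obstacle will be the polynomial bookkeeping behind the two factorizations—in particular, verifying that the discriminant in the inner inclusion works out to \emph{exactly} $-8\sin^2 r$, which is what simultaneously forces both the containment and the tangency at the two claimed points.
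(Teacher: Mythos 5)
Your proof is correct, and I checked the two factorizations you flag as the main risk: the inner-inclusion difference does factor as $(t-\sec r)^2\bigl(t^2+(2\sec r-4\cos r)t+\sec^2 r-2\sin^2 r\bigr)$ with discriminant exactly $-8\sin^2 r$, and the outer inclusion does reduce, after cancelling $(\alpha-t)(t-\beta)$ with $\alpha\beta=1$, to $t^2-2\sec r\cos(2r)\,t+1\ge 0$ on $[\beta,\alpha]$. Your route is genuinely different from the paper's. You reach the same boundary equation $|y_1|\sin r=|(x_n+y_n)\cos r-2\sqrt{x_ny_n}|$ as the paper's \eqref{f12}, but via the tangent-circle identity $(\ast)$ (inscribed angle plus law of cosines) rather than the paper's parabola/arctangent formula \eqref{vH}; the paper only remarks in passing that such a derivation exists. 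More substantially, where the paper proves the outer inclusion (Theorem \ref{vbeb}) by sandwiching the circular arc between a chordal triangle and $f_2$, then verifying positivity through second-derivative monotonicity and a long list of trigonometric identities at the endpoints $b_1,b_2$, and proves the inner inclusion (Theorem \ref{vbeb2}) by a similar calculus argument on $f_2^2-g_2^2$, your substitution $t=\sqrt{y_n/x_n}$ turns both into exact polynomial identities whose sign is read off from a discriminant; this is shorter, and the double roots at $t=\sec r$ and $t=\alpha,\beta$ deliver the tangency points, hence the sharpness claims, with no extra work. It also yields convexity of $B_v(x,r)$ as a by-product, which the paper never states. For the ``moreover'' part the paper instead uses domain monotonicity of $v$ plus a cited estimate for balls (Theorem \ref{vbeb3}) and a triangle inequality; your polar representation is a nice alternative, but note that $\rho(\theta)=x_n\sin r/\cos^2((\theta+r)/2)$ parametrizes only the half of $\partial B_v(x,r)$ with $y_1\ge 0$ (at $\theta=\pi$ it gives $2x_n\cot(r/2)$ instead of the correct $2x_n\tan(r/2)$); the other half is the second root $x_n\sin r/\sin^2((\theta+r)/2)$, or equivalently the mirror image, so the stated minimum $x_n\sin r$ at $\theta=-r$ and maximum $2x_n(\tan r/\cos r+\tan^2 r)$ at $\theta=\pi/2$ survive, but you should state the restriction. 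Finally, like the paper's own Theorems \ref{vbeb} and \ref{vbeb2}, your argument really covers $r\in(0,\pi/2)$; the endpoint $r=\pi/2$ in the statement only makes sense for the two centered balls.
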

This paper may be considered to be a continuation of the earlier studies \cite{hklv,k2,k3,k4,kms}. Our main results and their proofs suggest that similar results might be valid for other metrics as well and this offers ideas for further studies of the same topic, for instance for the Apollonian
or the Seittenranta metrics \cite{b,s}.


%
%

\section{Starlikeness and convexity of triangular ratio metric balls}\label{section3}


In this section we consider local convexity properties of $s$-metric balls. We start with $\mathbb{R}^n \setminus \{ 0 \}$ and generalize the results to proper subdomains of $\mathbb{R}^n$. Before studying local convexity properties we introduce preliminary results.

Given two points $x$ and $y$ in $\Rn$, the line segment between them is denoted by
$$
[x,y]=\{(1-t)x+ty\;:\; 0\le t\le1\}\,,
$$
and $\measuredangle(x, z, y)$ stands for the angle in
the range $[0, \pi]$ between the line segments $[x, z]$ and $[y, z]\,.$
\begin{defn}
Let $G\subsetneq \mathbb{R}^n$ be a domain and $x\in G\,.$ We say that $G$ is starlike with respect to $x$ if for every $y\in G\,,$ $[x,y]\subset G\,.$ The domain $G$ is strictly starlike with respect to $x$ if $G$ is bounded and each ray from $x$ meets $\partial{G}$ at exactly one point.
\end{defn}

{\bf Proof of Theorem \ref{stars}.} Without loss of generality we may assume $z =0\,.$
Fix $0<r\leq 1$. Let $y\in B_s(0,r)$. Since $s(y,0)<1$, $[0,y]\subset G$. Let $x\in[0,y]$ and $z\in\partial G$. Since
\[
|x|\leq |z|+|x-z| , \,\, |x|+|x-y|=|x|,\, \text{and}\,\, |y-z|\leq |y-x|+|x-z|,
\]
we have
\begin{eqnarray*}
\frac{|x|}{|z|+|x-z|}&\leq & \frac{|x|+|x-y|}{|z|+|x-z|+|x-y|}=\frac{|y|}{|z|+|x-z|+|x-y|}\\
&\leq & \frac{|y|}{|z|+|y-z|}\leq s(y,0)\,.
\end{eqnarray*}
Taking a supremum over all $z\in \partial G$ we thus obtain
\[
s(x,0)\leq s(y,0)< r\,,
\]
since $x$ is an arbitrary point in $[0,y]\subset B_s(0,r)$.\hfill $\square$

\bigskip

Next we continue the study of \cite{hklv} by considering the convexity of triangular ratio metric balls in a general subdomain of $\mathbb{R}^n$.
\begin{lem}\label{lem:convexity in punctured space}\cite[3.6, 3.8]{hklv}
  Let $x \in G = \mathbb{R}^n \setminus \{ 0 \}$ and $r \in (0,1)\,.$ Then $B_s(x,r)$ is (strictly) convex if and only if $r \le 1/2$ ($r<1/2$).
\end{lem}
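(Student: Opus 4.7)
The plan is to use the symmetries of $s$ on $\mathbb{R}^n\setminus\{0\}$ to reduce to a single planar curve, then recognize that curve as a lima\c{c}on of Pascal and invoke the classical lima\c{c}on convexity criterion. Since $s(y,x)$ depends only on $|x|$, $|y|$, and $\measuredangle(x,0,y)$, the ball $B_s(x,r)$ is rotationally symmetric about the line through $0$ and $x$; hence convexity in $\mathbb{R}^n$ is equivalent to convexity of its planar cross-section through this line, and the scale-invariance $s(\lambda y,\lambda x)=s(y,x)$ further lets me normalize to $x=e_1\in\mathbb{R}^2$.

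Next I would compute the boundary in polar coordinates $(t,\phi)$ centered at $x$ (rather than at $0$, which hides the structure). Writing $y=x+(t\cos\phi,t\sin\phi)$, so that $|y|^2=1+2t\cos\phi+t^2$, and squaring $|y-x|=r(|y|+1)$, an elementary manipulation yields the closed-form polar equation
\[
t(\phi) \;=\; \frac{2r(1+r\cos\phi)}{1-r^2} \;=\; a+b\cos\phi, \qquad a=\frac{2r}{1-r^2},\ \ b=\frac{2r^2}{1-r^2}.
\]
Thus $\partial B_s(e_1,r)$ is a lima\c{c}on of Pascal with ratio $a/b=1/r$.

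The region bounded by a lima\c{c}on $t=a+b\cos\phi$ ($a,b>0$) is classically known to be convex if and only if $a\ge 2b$, and strictly convex if and only if $a>2b$; this follows from the polar curvature formula, in which the numerator $t^2+2t'^2-t\,t''$ simplifies to $a^2+3ab\cos\phi+2b^2$ with minimum $(a-b)(a-2b)$ attained at $\phi=\pi$. With $a/b=1/r$, the threshold $a\ge 2b$ becomes $r\le 1/2$ (strictly, $r<1/2$), which is exactly the claim. The main obstacle---mild here---is spotting that re-centering the polar system at $x$ produces the lima\c{c}on; in polar coordinates based at the origin, the boundary is a two-root quadratic in $\rho$ (with roots $\rho_\pm$ satisfying $\rho_+\rho_-=1$ by inversive symmetry) and the convexity criterion is considerably less transparent.
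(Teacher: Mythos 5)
Your proposal is correct, but note first that the paper itself offers no proof of this lemma: it is quoted directly from \cite[Theorems 3.6, 3.8]{hklv}, so your argument should be measured against that source rather than against anything in the present text. Your computation checks out: with $x=e_1$ and $y=e_1+t(\cos\phi,\sin\phi)$, squaring $|y|=t/r-1$ (legitimate here, since on the boundary $t\ge 2r/(1+r)>r$, so no spurious roots are introduced) yields exactly $t=2r(1+r\cos\phi)/(1-r^2)=a+b\cos\phi$ with $a/b=1/r>1$, and the curvature numerator $t^2+2t'^2-tt''=a^2+2b^2+3ab\cos\phi$ attains its minimum $(a-b)(a-2b)$ at $\phi=\pi$, so the threshold $r=1/2$ falls out cleanly. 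The underlying tool --- the sign of $f^2+2f'^2-ff''$ for a polar graph --- is the same one used in \cite{hklv}, but your re-centering of the polar coordinates at $x$, which identifies $\partial B_s(x,r)$ as a lima\c{c}on of Pascal, is a genuinely tidier route than working with the boundary as a root of a quadratic in origin-centered coordinates; it buys a one-line verification of the threshold and an immediate picture (the dimple of the lima\c{c}on for $1<a/b<2$) of \emph{why} convexity fails for $r>1/2$.

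Two points should be made explicit in a written version. First, the reduction from $\mathbb{R}^n$ to the planar section through $0$ and $x$ needs the (standard, but not free) remark that a body invariant under rotations about an axis is convex iff its planar axial section is convex; this follows from the triangle inequality applied to the radial components together with the observation that a convex planar set symmetric in the axis contains $(u,v')$ whenever it contains $(u,v)$ and $|v'|\le|v|$. Second, at $r=1/2$ the curvature vanishes only at the single boundary point $\phi=\pi$, where the boundary osculates its tangent line to fourth order; hence $B_s(x,1/2)$ contains no segment in its boundary, and the ``only if'' half of the strict-convexity claim is true only under the curvature-based notion of strict convexity (positive curvature at every boundary point), which is the criterion your proof in fact verifies and the one operative in \cite{hklv}. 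You should state which definition of strict convexity you are using, since under the ``no line segments in the boundary'' definition the ball at $r=1/2$ would also count as strictly convex.
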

\begin{thm}\label{thm:main2}
  Let $G \subsetneq \mathbb{R}^n$ be a domain, $x \in G$ and $r \in (0,1)\,.$ Then $B_s(x,r)$ is convex if $r \le 1/2\,.$
\end{thm}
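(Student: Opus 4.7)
The strategy is to realize each closed $s_G$-ball of radius $r'\le 1/2$ as an intersection of convex sets, applying Lemma \ref{lem:convexity in punctured space} one boundary point at a time, and then express $B_{s_G}(x,r)$ as the increasing union of such closed balls of radii strictly less than $r$.

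For fixed $r'\in(0,1)$ and each $z\in\partial G$, I would consider
\[
C_z^{r'}=\{y\in\mathbb{R}^n : |x-y|\le r'(|x-z|+|z-y|)\}.
\]
A translation by $-z$ identifies $C_z^{r'}$ with the Euclidean closure of the open $s$-ball of radius $r'$ in the punctured domain $\mathbb{R}^n\setminus\{0\}$, and $z\notin C_z^{r'}$ because $r'<1$. Lemma \ref{lem:convexity in punctured space} makes that open ball convex for $r'\le 1/2$, hence its Euclidean closure $C_z^{r'}$ is convex as well. Consequently $K_{r'}:=\bigcap_{z\in\partial G}C_z^{r'}$ is convex and contains $x$.

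Next I would place $K_{r'}$ inside $G$. If $y\in K_{r'}\cap\partial G$, then specialising $z=y$ in the defining inequality of $C_z^{r'}$ gives $|x-y|\le r'|x-y|$, which is impossible for $r'<1$ and $y\ne x$. Hence $K_{r'}$ avoids $\partial G$. Being convex, $K_{r'}$ is connected, and its decomposition
\[
K_{r'}=(K_{r'}\cap G)\cup(K_{r'}\cap\mathrm{int}(\mathbb{R}^n\setminus G))
\]
into disjoint, relatively open pieces forces $K_{r'}\subset G$, because $x\in K_{r'}\cap G$. Therefore $K_{r'}=\{y\in G:s_G(x,y)\le r'\}$ is the closed $s_G$-ball.

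Finally, $s_G(x,y)<r$ is equivalent to $s_G(x,y)\le r'$ for some $r'\in(0,r)$, so $B_{s_G}(x,r)=\bigcup_{0<r'<r}K_{r'}$. For $r\le 1/2$ this is a nested family of convex sets, and convexity of such a union is an immediate two-point argument, finishing the proof. The main subtlety I anticipate is the mismatch between the strict inequality defining $B_{s_G}(x,r)$ and the pointwise strict inequalities produced by a supremum over $\partial G$: a naive intersection of open punctured-space balls would only trap $B_{s_G}(x,r)$ between its open and closed versions, and working with closed balls of every radius $r'<r$ and taking the monotone union is what sidesteps this open/closed discrepancy.
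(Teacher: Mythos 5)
Your proposal is correct and rests on the same core idea as the paper's proof --- decompose the problem over boundary points and invoke Lemma \ref{lem:convexity in punctured space} for each punctured space $\mathbb{R}^n\setminus\{z\}$ --- but the implementation is genuinely different. The paper simply cites \cite[(2.2)]{hklv} for the identity $B_{s_G}(x,r)=\bigcap_{z\in\partial G}B_{s_z}(x,r)$ with \emph{open} balls (which holds because the supremum defining $s_G$ is attained on the closed set $\partial G$, the competitor function being continuous and decaying at infinity) and concludes in two lines. You correctly identified that, without that attainment fact, the open-ball intersection only traps $B_{s_G}(x,r)$ between its open and closed versions, and you sidestep this by using closed sublevel sets $C_z^{r'}$ --- for which the intersection identity $\{s_G(x,\cdot)\le r'\}=\bigcap_z\{s_z(x,\cdot)\le r'\}$ is automatic --- followed by a monotone union over $r'<r$. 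Your connectedness argument placing $K_{r'}$ inside $G$ is also a point the paper leaves implicit. What your route buys is self-containedness; what it costs is one step that is stated too loosely: you assert that $C_z^{r'}$ \emph{is} the Euclidean closure of the open ball $B_{s_z}(x,r')$, which is not obvious (a closed sublevel set can strictly contain the closure of the open one). This is easily repaired without changing your architecture: since you only ever need $r'<r\le 1/2$, write $C_z^{r'}=\bigcap_{r'<r''\le 1/2}B_{s_z}(x,r'')$, an intersection of sets that are convex by Lemma \ref{lem:convexity in punctured space}, hence convex. With that substitution your argument is complete.
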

\begin{proof}
  By \cite[(2.2)]{hklv} the ball $B_s(x,r)$ is the intersection of balls $B_{s_z}(x,r)\,,$ where $s_z$ is the triangular ratio metric in $\mathbb{R}^n \setminus \{ z \}\,,$ $z \in \partial G\,,$ and by Lemma \ref{lem:convexity in punctured space} each of these balls $B_{s_z}(x,r)$ is convex. The assertion follows as intersection of convex domains is a convex domain.
\end{proof}

\begin{figure}[h]
\begin{center}
     \includegraphics[height=.35 \textwidth]{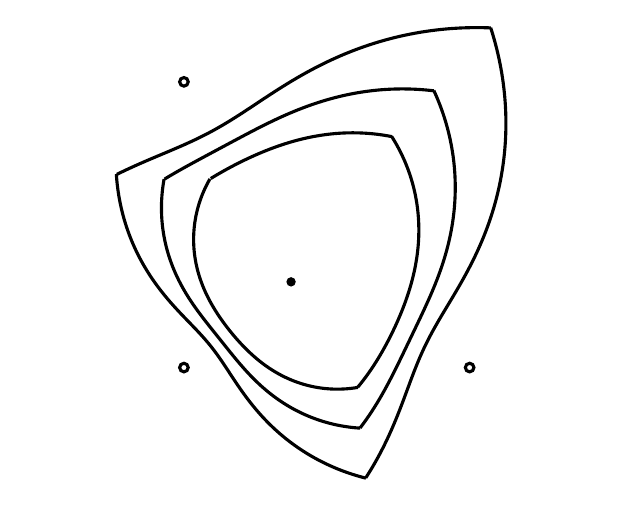}\hspace{5mm}
     \includegraphics[angle=90,height=.34 \textwidth]{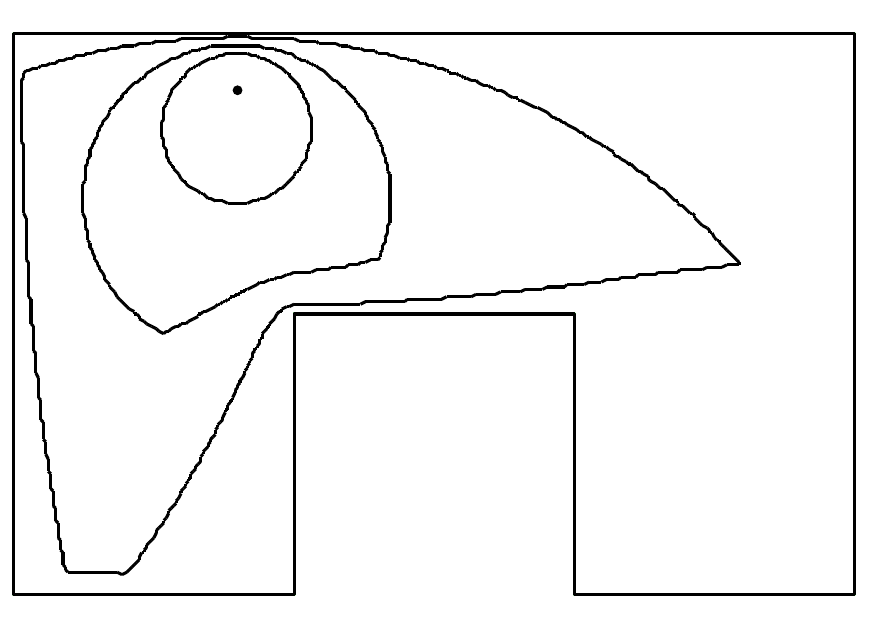}
\caption{Left: The $s$-metric disks $s(0.75e_1 + 0.6 e_2,r)$ in $\mathbb{R}^2 \setminus \{ 0,2e_1,2e_2 \}$ with $r=0.4,\, r=0.5,\, r=0.6\,.$ Right: $s$-metric disks in a polygonal domain.}
\end{center}
\end{figure}
Finally, we make the following conjecture for the balls of the point pair function, which is defined below in \ref{ppfun}.

\begin{conjecture}
  Let $x \in  \mathbb{R}^n \setminus \{ 0 \}$ and $r \in (0,1)\,.$ Then $B_p(x,r)$ is (strictly) convex if and only if $r \le \sqrt{2}-1\,.$
\end{conjecture}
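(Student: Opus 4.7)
The plan is to reduce to a two-dimensional question via rotational symmetry and then convert the local convexity of the boundary into a single-variable polynomial inequality.

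First, since $p_G$ is invariant under the scaling $y\mapsto\alpha y$ on $G=\Rn\setminus\{0\}$, I may assume $|x|=1$ and then $x=e_1$ after a rotation. Writing $\lambda:=4r^2/(1-r^2)$, the $p$-ball takes the form $B_p(x,r)=\{y:f(y)<0\}$ with $f(y):=|y-x|^2-\lambda|y|$. Because $f$ is rotationally symmetric about the axis $\mathbb{R}x$, the ball $B_p(x,r)$ is the solid of revolution of its planar cross-section $C$; since $C$ is itself symmetric across $\mathbb{R}x$, convexity of $B_p(x,r)$ is equivalent to convexity of $C\subset\mathbb{R}^2$.

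For convexity of the sub-level set $\{f\le 0\}\subset\mathbb{R}^2$ it suffices, by the second-order criterion, to verify $v^T\,\mathrm{Hess}(f)(y)\,v\ge 0$ at every boundary point $y$ and every tangent direction $v$. A direct computation yields $\mathrm{Hess}(f)=(2-\lambda/R)I+(\lambda/R^3)\,yy^T$ with $R=|y|$. Using the boundary equation $|y-x|^2=\lambda R$ I would derive the two identities $y\cdot\nabla f=R^2-1$ and $|\nabla f|^2=\lambda(2R^2+2-\lambda R)/R$; substituting these into $v^THv$ for a unit tangent $v$ reduces the convexity condition to the single-variable inequality
\[
Q(R)\;:=\;3R^4-2\lambda R^3+6R^2-1\;\ge\;0,
\]
required for every $R\in[u_-,u_+]$, the interval of distances $|y|$ attained on $\partial C$. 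A direct calculation gives $u_-=(1-r)/(1+r)$ and $u_+=1/u_-$.

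Since $Q'(R)=6R(2R^2-\lambda R+2)$ and the quadratic factor has negative discriminant for $\lambda<4$, $Q$ is strictly increasing on $(0,\infty)$ in the range of interest, so $\min_{[u_-,u_+]}Q=Q(u_-)$. Using $u_-^2+1=(2+\lambda)u_-$ to eliminate $\lambda$ from $Q(u_-)$ yields the clean factorization
\[
Q(u_-)\;=\;u_-^4+4u_-^3+4u_-^2-1\;=\;(u_-+1)^2\,(u_-^2+2u_--1),
\]
whose sign is carried by the last factor. Thus $Q(u_-)\ge 0\iff u_-\ge\sqrt{2}-1\iff r\le\sqrt{2}-1$. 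For $r>\sqrt{2}-1$ the inequality $Q(u_-)<0$ exhibits the boundary point $(u_-,0)$ at which the tangential quadratic form is negative, so the ball fails to be convex.

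The main obstacle is the algebraic reduction of the tangential convexity inequality to the clean polynomial $Q(R)$: the boundary equation must be used twice, once for $y\cdot\nabla f$ and once for $|\nabla f|^2$, and the cancellations that yield a single polynomial in $R$ are not obvious in advance. Once $Q(R)$ is identified, the monotonicity of $Q$ together with the factorization $Q(u_-)=(u_-+1)^2(u_-^2+2u_--1)$ makes the critical radius $r=\sqrt{2}-1$ fall out directly.
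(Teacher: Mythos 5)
You should note at the outset that the paper offers \emph{no} proof of this statement: it is stated as a conjecture and left open, so there is nothing to compare your argument against except its own internal correctness. I have checked your computations and they are right. With $\lambda=4r^2/(1-r^2)$ the ball is indeed $\{\,|y-x|^2<\lambda|y|\,\}$; the scaling/rotation invariance of $p$ and the rotational symmetry of $f(y)=|y-x|^2-\lambda|y|$ about $\mathbb{R}x$ justify the reduction to the planar cross-section; the identities $y\cdot\nabla f=R^2-1$ and $|\nabla f|^2=\lambda(2R^2+2-\lambda R)/R$ on $\{f=0\}$ are correct; writing $(v\cdot y)^2=R^2-(y\cdot\nabla f)^2/|\nabla f|^2$ for the unit tangent $v$ and clearing the positive denominator does give $v^T Hv\ge 0\iff Q(R)=3R^4-2\lambda R^3+6R^2-1\ge 0$; the range $[u_-,u_+]$ with $u_-=(1-r)/(1+r)$, the monotonicity of $Q$ for $\lambda<4$, and the factorization $Q(u_-)=(u_-+1)^2(u_-^2+2u_--1)$ all check out, and $u_-\ge\sqrt2-1\iff r\le\sqrt2-1$. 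The non-convexity direction is airtight as stated: for every $r>\sqrt2-1$ the boundary point $u_-e_1$ has $\nabla f\ne 0$ and negative tangential Hessian, so $y\pm tv$ lie in the open ball while their midpoint does not.

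Two points need to be made explicit before this can stand as a proof. First, the ``second-order criterion'' you invoke is only a \emph{local} statement: nonnegative tangential Hessian at every boundary point gives local convexity of the boundary, and you must pass from this to global convexity. This requires knowing that $\{f\le 0\}$ is a compact connected set with smooth boundary ($\nabla f\ne 0$ on $\{f=0\}$, which holds since any critical point of $f$ on the ray $\mathbb{R}_{>0}x$ lies strictly inside the ball and the one on the opposite ray lies outside for $\lambda<4$, which covers the regime $r\le\sqrt2-1$), after which the Tietze--Nakajima local-to-global convexity theorem, or the turning-tangent argument for a simple closed plane curve of nonnegative curvature, finishes the job; connectedness follows, e.g., from the fact that each ray from the origin meets $\{f\le 0\}$ in a segment. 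Second, you do not address the parenthetical ``strictly'' in the conjecture: your computation shows $Q>0$ on all of $[u_-,u_+]$ when $r<\sqrt2-1$ and $Q(u_-)=0$ with $Q>0$ elsewhere when $r=\sqrt2-1$, which is exactly what is needed to separate strict convexity from convexity in the sense used in the companion result (Lemma \ref{lem:convexity in punctured space}), but a sentence to that effect should be added. With those two standard steps filled in, your argument settles the conjecture affirmatively, which is more than the paper does.
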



\section{Formula for visual angle metric in $\Hn$}\label{section4}

For a domain $G\subsetneq \mathbb{R}^n\,,$ $n\geq 2\,,$ and $x,y\in G$ let
\begin{equation}\label{v}
v_G(x,y)=\sup\{\measuredangle (x,z,y): z\in\partial{G} \}\,.
\end{equation}
If $\partial G$ is not a proper subset of a line, then $v_G$ defines a metric on $G$, as shown in \cite[Lemma 2.8]{klvw}.

The supremum in \eqref{v} can be found by a geometric construction if $G=\mathbb{B}^2\,.$ Indeed by \cite[Theorem 1.2]{klvw}, by considering the two points $z_1$ and $z_2$ of intersection of the ellipses with foci at $0, x$ and $0,y\,,$ respectively, both with focal sum equal to $1\,,$ the formula for $v_{\mathbb{B}^2}(x,y)$ is just
\[
v_{\mathbb{B}^2}(x,y)=\max\{\measuredangle(x,z_1/|z_1|,y),\measuredangle(x,z_2/|z_2|,y)  \}\,.
\]
Here we find an analogue of this formula for $\mathbb{H}^2$ by finding the points of intersection of two parabolas with foci at $x$ and $y\,,$ respectively, and both with the real axis $\partial \mathbb{H}^2$ as the directrix, see Figure \ref{parabolas}.

\begin{figure}[h]\label{parafig}
  \begin{center}
     \includegraphics[width=14cm]{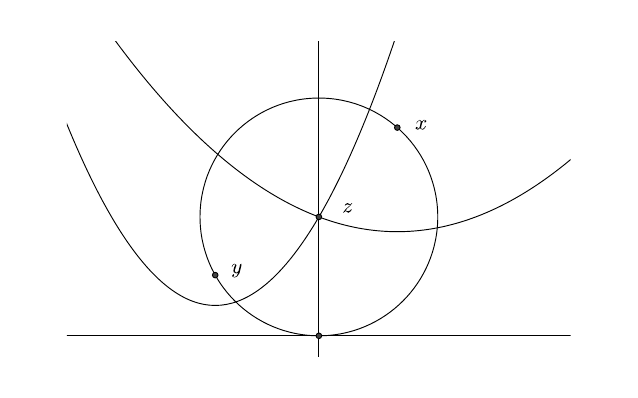}
  \caption{\label{parabolas} The point $z=(z_1,z_2)$ is the intersection of the parabola with focus $x$ and directrix $\partial \UH$ and the parabola with focus $y$ and directrix $\partial \UH\,.$ The extremal point $(z_1,0)$ in the definition of $v_{\UH}(x,y)$ can be found as the projection of $z$ to $\partial \UH\,.$}
  \end{center}
\end{figure}

For this purpose it is convenient to use horocycles. For two distinct points $x,y\in G$ where $G=\mathbb{B}^2$ or $G=\mathbb{H}^2$, a horocycle through $x,y$ is a Euclidean circle or line through $x$ and $y$ tangent to $\partial G\,.$

We consider the problem of finding the center points of two horocycles. These centers are the points of intersection of the parabolas with foci at $x$ and $y$ and directrix $\partial{\UH}\,.$ Therefore the formula for centers $z=(z_1,z_2)$ of these horocycles are given by
\[
|x-z|=z_2=|y-z|\,,
\]
we have
\[
  \begin{cases}
  (x_1-z_1)^2+(x_2-z_2)^2={z_2}^2\,, \\
   (y_1-z_1)^2+(y_2-z_2)^2={z_2}^2\,.
  \end{cases}
\]
By solving this system of quadratic equations we get
\[
z_1=\frac{x_2 y_1-x_1 y_2\pm\sqrt{x_2y_2}|x-y|}{x_2-y_2}\,, \quad x_2\neq y_2\,.
\]
If $x_2=y_2\,,$ then $z_1=(x_1+y_1)/2.$
In terms of this solution in the case $x_2\neq y_2\,,$ the possible extremal points for the visual angle metric are the two possible points of the form $(z_1,0)\in\partial{\UH}\,,$ and here we choose either $+$ or $-$ whichever corresponds to the smaller imaginary part.

Now by Figure \ref{vHfig}
\[
v_{\UH}(x,y)=\pi-\alpha-\beta\,,
\]
\[
\alpha=\arctan\left(\frac{x_2}{z_1-x_1}\right)\, \text{and}\,\,\, \beta=\arctan\left(\frac{y_2}{y_1-z_1}\right)\,.
\]
Therefore
\beq\label{vH}
v_{\UH}(x,y) \equiv \pi-\arctan\left(\frac{2\sqrt{x_2 y_2}|x-y| \pm (x_2+y_2)(x_1-y_1)}{(x_1-y_1)^2-4 x_2 y_2}\right) \quad (\textrm{mod\,} \pi)\,.
\eeq
Another formula for $v_{\UH}(x,y)$ can be derived from
\[
  z_2 = \frac{|x-y|}{2(x_2-y_2)^2} \left( |x-y|(x_2+y_2) \mp 2(x_1-y_1) \sqrt{x_2 y_2} \right)
\]
and the law of cosines together with the inscribed angle theorem.

\begin{figure}[h]
  \begin{center}
     \includegraphics[width=14cm]{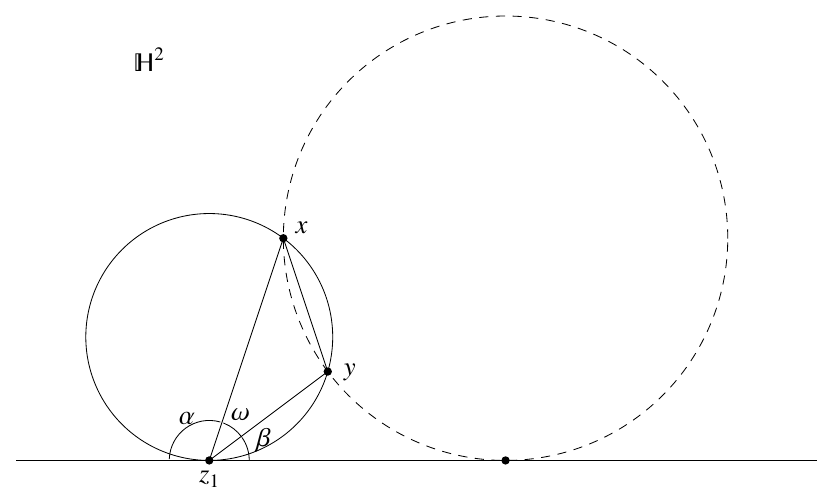}
  \caption{\label{vHfig} Two horocycles through $x$ and $y$ and the extremal point $(z_1,0)$ with $v_{\mathbb{H}^2}(x,y)=\measuredangle (x,z_1,y)\,.$}
  \end{center}
\end{figure}

\section{Inclusion properties of metric balls in $\Rn\setminus\{0\}$ and $\mathbb{H}^n$}\label{section5}

In this section we study inclusions of the visual angle metric balls and other metric balls. We begin by defining the metrics which we use.


\begin{nonsec}{\bf Quasihyperbolic metric.}
Let $G$ be a proper subdomain of ${\mathbb R}^n\,\,.$ For all $x,\,y\in G\,,$ the quasihyperbolic metric $k_G$ is defined as
$$k_G(x,y)=\inf_{\gamma}\int_{\gamma}\frac{1}{d(z,\partial G)}|dz|,$$
where the infimum is taken over all rectifiable arcs $\gamma$ joining $x$ to $y$ in $G$ \cite{gp}.
If we assume $x,y\in G=\Rn\setminus\{0\}$ and the angle $\varphi$ between the line segments $[0,x]$ and $[0,y]$ satisfies $0< \varphi< \pi$ then by \cite[3.11]{vu}
\beq\label{k}
k_G(x,y)=\sqrt{\varphi^2+\log^2{\frac{|x|}{|y|}}},\quad G=\Rn\setminus\{0\}\, .
\eeq

\end{nonsec}

\begin{nonsec}{\bf Distance ratio metric.}
For a proper open subset $G \subset {\mathbb R}^n\,$ and for all
$x,y\in G$, the  distance ratio
metric $j_G$ is defined as
$$
 j_G(x,y)=\log \left( 1+\frac{|x-y|}{\min \{d(x,\partial G),d(y, \partial G) \} } \right)\,.
$$
This metric was introduced by Gehring and Palka
\cite{gp} in a slightly different form and in the above form in \cite{vu0}. If confusion seems unlikely, then we also write $d(x)= d(x,\partial G)\,.$

\end{nonsec}


\begin{nonsec}{\bf Point pair function.}\label{ppfun}
We define for $x,y\in G\subsetneq\mathbb{R}^n$ the point pair function
$$p_G(x,y)=\frac{|x-y|}{\sqrt{|x-y|^2+4\,d(x)\,d(y)}}\,.$$
This point pair function was introduced in \cite{chkv} where it turned out to be a very
useful function in the study of the triangular ratio metric.
However, there are domains $G$ such that $p_G$ is not a metric: for instance this is the case if $G=\mathbb{B}^2\,,$ \cite[Remark 3.1]{chkv}.
\end{nonsec}

\begin{nonsec}{\bf Chordal metric.}
The chordal metric is defined by
\[
  \begin{cases}
 \displaystyle q(x,y) = \frac{|x-y|}{\sqrt{1+|x|^2}\sqrt{1+|y|^2}}&;\, x, y\in\Rn \,, \\
 \displaystyle  q(x,\infty)=\frac{1}{\sqrt{1+|x|^2}}\,.
  \end{cases}
\]
\end{nonsec}

\begin{prop}\label{vpun}
If $G= \Rn\setminus\{0\}\,,$ then the $v$-balls $B_v(x,r)\,,$ $x\in G\,,$ $r\in (0,\pi)\,,$ are angular domains with vertex at $0\,.$
\end{prop}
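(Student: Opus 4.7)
The plan is to exploit the fact that the boundary of $G = \mathbb{R}^n \setminus \{0\}$ consists of the single point $0$, which trivializes the supremum in the definition \eqref{v} of $v_G$. First I would observe that for all $x,y \in G$,
\[
v_G(x,y) \;=\; \sup\{\measuredangle(x,z,y): z\in \partial G\} \;=\; \measuredangle(x,0,y),
\]
because $\partial G = \{0\}$ and the supremum reduces to evaluation at a single point. Consequently
\[
B_v(x,r) \;=\; \{\, y \in \mathbb{R}^n\setminus\{0\}: \measuredangle(x,0,y) < r\,\}.
\]

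Next I would use the scale-invariance of the angle $\measuredangle(x,0,y)$: this angle depends only on the unit vectors $x/|x|$ and $y/|y|$, not on the Euclidean lengths $|x|$ and $|y|$. Hence if $y \in B_v(x,r)$, then for every $t > 0$ the point $ty$ also lies in $B_v(x,r)$, since $\measuredangle(x,0,ty) = \measuredangle(x,0,y) < r$. Thus $B_v(x,r)$ is a union of open rays emanating from $0$, i.e., a cone with apex at $0$ (the apex itself removed since $0 \notin G$). This is precisely what is meant by an angular domain with vertex at $0$.

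The condition $r\in(0,\pi)$ is only there to make the statement informative: for $r>0$ the ball is nonempty (it contains $x$), and for $r<\pi$ it is a proper subset of $G$ (it omits the opposite ray $\{-tx/|x|:t>0\}$ and points making angle at least $r$ with $x$). No real obstacle arises here; the proposition is essentially a tautological consequence of $\partial G = \{0\}$ combined with the scale invariance of angles at the origin.
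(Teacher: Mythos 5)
Your proposal is correct and is exactly the reasoning the paper relies on: since $\partial(\mathbb{R}^n\setminus\{0\})=\{0\}$, the supremum in \eqref{v} collapses to $v_G(x,y)=\measuredangle(x,0,y)$, which depends only on $y/|y|$, so $B_v(x,r)$ is the open cone of half-angle $r$ about the ray from $0$ through $x$. The paper states Proposition \ref{vpun} without proof, treating it as evident, and your argument supplies precisely the justification that its later lemmas (e.g.\ Lemmas \ref{sv}--\ref{pv}) implicitly use.
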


\begin{lem}\label{sv}
For all $x\in \Rn\setminus\{0\}\,,$ $r\in (0,\pi]$
\[
B_s \left( x,\sin{\frac{r}{2}} \right) \subset B_v(x,r)\,,
\]
and the radius $\sin{\frac{r}{2}}$ is best possible.
\end{lem}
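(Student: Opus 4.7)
The plan is to exploit the fact that $\partial G = \{0\}$, so both metrics admit simple closed-form expressions: the supremum in the definition of $s$ is attained (trivially) at $z=0$, giving $s(x,y) = |x-y|/(|x|+|y|)$, while $v(x,y) = \measuredangle(x,0,y)$ is simply the angle at the origin. The inclusion will follow from the pointwise inequality
\[
s(x,y)\ \ge\ \sin\!\bigl(v(x,y)/2\bigr),
\]
valid for all $x,y\in G$.

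To prove this inequality I would set $\varphi = v(x,y)\in[0,\pi]$ and use the law of cosines, $|x-y|^2 = |x|^2+|y|^2 - 2|x||y|\cos\varphi$. Dividing by $(|x|+|y|)^2$ and applying the half-angle identity $1+\cos\varphi = 2\cos^2(\varphi/2)$ rewrites the square of $s(x,y)$ as
\[
s(x,y)^2 \;=\; 1 - \frac{4|x||y|}{(|x|+|y|)^2}\cos^2(\varphi/2).
\]
Now the AM--GM bound $4|x||y|\le(|x|+|y|)^2$, with equality iff $|x|=|y|$, immediately yields $s(x,y)^2 \ge 1-\cos^2(\varphi/2) = \sin^2(\varphi/2)$, hence the claimed inequality.

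The inclusion is then a one-line consequence. Since $r\in(0,\pi]$ we have $v(x,y)/2\in[0,\pi/2]$ whenever $v(x,y)\ge r$, a range on which $\sin$ is monotonically increasing. Contrapositively, $s(x,y)<\sin(r/2)$ forces $v(x,y)<r$, which is exactly $B_s(x,\sin(r/2))\subset B_v(x,r)$.

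For sharpness I would exhibit equality cases: pick any $y$ with $|y|=|x|$ and $\measuredangle(x,0,y)=r$ (such a point exists because we can rotate $x$ about the origin in the 2-plane spanned by $x$ and any independent direction). Then AM--GM is tight, so $s(x,y) = \sin(r/2)$ while $v(x,y) = r$. Approaching $y$ from the correct side along the sphere $|y|=|x|$ produces points in $B_v(x,r)^c$ whose $s$-distance to $x$ tends to $\sin(r/2)$ from above, so no strictly larger radius works. I do not anticipate a real obstacle here; the only care points are handling the equality case of AM--GM (which pins down the extremal configuration $|x|=|y|$) and confirming that monotonicity of $\sin$ holds on the relevant half-angle range, both of which rely on $r\le\pi$.
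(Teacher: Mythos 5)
Your proof is correct, and it reaches the same extremal configuration ($|y|=|x|$, angle exactly $r$) as the paper, but by a genuinely cleaner route. The paper first invokes Proposition \ref{vpun} (that $B_v(x,r)$ in $\Rn\setminus\{0\}$ is the angular domain $\{y:\measuredangle(y,0,x)<r\}$), then parametrizes the bounding ray as $y=te^{ir}$ and minimizes $f(t)=\sqrt{1+t^2-2t\cos r}/(1+t)$ by differentiation, finding the minimum $\sin(r/2)$ at $t=1$. You instead prove the pointwise comparison
\[
s(x,y)^2 \;=\; 1-\frac{4|x||y|}{(|x|+|y|)^2}\cos^2\!\bigl(v(x,y)/2\bigr)\;\ge\;\sin^2\!\bigl(v(x,y)/2\bigr)
\]
directly from the law of cosines and AM--GM, and deduce the inclusion from monotonicity of $\sin$ on $[0,\pi/2]$. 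This buys several things: no calculus, no explicit appeal to the angular-domain description of $B_v$ (you only need $v(x,y)=\measuredangle(x,0,y)$, which is immediate from the definition since $\partial G=\{0\}$), and a standalone inequality $s\ge\sin(v/2)$ with a characterized equality case, from which sharpness drops out at once (the point $y$ with $|y|=|x|$ and $\measuredangle(x,0,y)=r$ lies in $B_s(x,R)$ for every $R>\sin(r/2)$ but not in $B_v(x,r)$). The paper's ray-minimization is the template it reuses for the analogous lemmas for $j$, $k$, $q$, $p$, which is presumably why it is phrased that way; for this particular lemma your argument is shorter and self-contained.
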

\begin{proof}
By symmetry we may assume that $n=2$ and $x=e_1\,.$ Let $y=t e^{ir},\, t>0\,.$ Now
\[
s_G(x,y)=\frac{|x-y|}{|x|+|y|}=\frac{\sqrt{1+t^2-2t\cos{r}}}{1+t}=:f(t)\,.
\]
By Proposition \ref{vpun}, we want to minimize $f(t)\,.$ Now
\[
f'(t)=\frac{(t-1)(1+\cos{r})}{(1+t)^2\sqrt{1+t^2-2t\cos{r}}}\,,
\]
and $f'(t)=0$ if and only if $t=1\,,$ so $|y|=|x|$ and $s_G(x,y)=\frac{|x-y|}{2|x|}=\sin{\frac{r}{2}}\,.$
The sharpness follows from this argument.
\end{proof}

\begin{lem}\label{jv}
For all $ x\in\Rn\setminus\{0\}$ and $r\in (0,\pi]$
\[
B_j \left( x,\log \left( 1+2\sin{\frac{r}{2}} \right) \right) \subset B_v(x,r)\,,
\]
and the radius $\log(1+2\sin{\frac{r}{2}})$ is best possible.
\end{lem}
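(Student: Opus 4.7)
The plan is to mimic the structure of Lemma \ref{sv}: fix the extremal angular direction on $\partial B_v(x,r)$, minimize $j_G(x,y)$ over all $y$ in that direction, and verify that the minimum is exactly $\log(1+2\sin(r/2))$.

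First I would invoke rotational symmetry to reduce to $n=2$ with $x=e_1$. Since $G=\mathbb{R}^n\setminus\{0\}$, we have $\partial G=\{0\}$ and therefore $v_G(x,y)=\measuredangle(x,0,y)$. As in Proposition \ref{vpun}, the ball $B_v(x,r)$ is the angular sector with vertex $0$ and half-opening $r$ around the ray through $x$, so $y\in B_v(x,r)$ iff $\measuredangle(x,0,y)<r$. It therefore suffices to show that whenever $\measuredangle(x,0,y)=\theta$ one has
\[
j_G(x,y)\ \ge\ \log\bigl(1+2\sin(\theta/2)\bigr),
\]
with equality realized for some $y$; the lemma then follows from the monotonicity of $\sin$ on $[0,\pi/2]$.

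Next I would parametrize $y=t e^{ir}$, $t>0$, so that $|x-y|=\sqrt{1+t^2-2t\cos r}$ and $\min\{|x|,|y|\}=\min\{1,t\}$. I would split into the two cases dictated by the $\min$ in the definition of $j_G$. For $t\ge 1$ the argument of $j_G$ equals $1+\sqrt{1+t^2-2t\cos r}$, whose derivative in $t$ has the sign of $t-\cos r>0$, so this case is decreasing as $t\searrow 1$. For $t\le 1$ the argument is $1+\sqrt{1+t^2-2t\cos r}/t$; substituting $u=1/t\ge 1$ turns the quotient into $\sqrt{1+u^2-2u\cos r}$, which by the same sign computation is decreasing as $u\searrow 1$, i.e.\ as $t\nearrow 1$. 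Hence on both sides the minimum is attained at $t=1$, giving the value
\[
j_G(x,y)\bigl|_{t=1}=\log\bigl(1+\sqrt{2-2\cos r}\bigr)=\log\bigl(1+2\sin(r/2)\bigr).
\]

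Finally, for sharpness I would exhibit $y_0=e^{ir}$: it satisfies $j_G(x,y_0)=\log(1+2\sin(r/2))$ while $\measuredangle(x,0,y_0)=r$, so $y_0\notin B_v(x,r)$. Consequently the radius cannot be enlarged. I do not expect any serious obstacle; the only care-points are verifying that the minimizer is the common value $t=1$ in \emph{both} regimes of the $\min$, and using that $r/2,\theta/2\in[0,\pi/2]$ so that $\sin$-monotonicity legitimately converts the $j$-inequality into an angular inequality.
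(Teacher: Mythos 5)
Your proposal is correct and follows essentially the same route as the paper: reduce to $n=2$ with $x=e_1$, parametrize $y=te^{ir}$, show via the sign of the derivative in each regime of the $\min$ that $j_G(x,y)$ is minimized at $t=1$ on the ray of angle $r$, and read off the value $\log(1+2\sin(r/2))$ together with sharpness at $y_0=e^{ir}$. Your substitution $u=1/t$ for the case $t\le 1$ and the explicit reduction from the exterior of the sector to its boundary ray are just slightly more spelled-out versions of steps the paper leaves implicit.
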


\begin{proof}
In the same way as in the proof of Lemma \ref{sv}, we may assume again that $n=2\,,$ $x=e_1\,$ and $y=t e^{ir},\, t>0\,.$ Now
\[
j_G(x,y)=\log \left( 1+\frac{|x-y|}{\min \{|x|, |y| \} } \right)=\log\left(1+\frac{\sqrt{1+t^2-2t\cos{r}}}{\min \{1,t\}}\right)\,.
\]
Define
\[
 f(t) =
  \begin{cases}
   \displaystyle \log\Big(1+\frac{\sqrt{1+t^2-2t\cos{r}}}{t}\Big)&\text{for}\,\,\, t\leq 1\,, \\ \\
    \displaystyle \log\Big(1+\sqrt{1+t^2-2t\cos{r}}\Big) & \,\text{for}\,\,\, t> 1\,.
  \end{cases}
\]
Computation yields
\[
 f'(t) =
  \begin{cases}
   \displaystyle \cfrac{t\cos{r}-1}{t(1-2t\cos r+t(t+ \sqrt{1+t^2-2t\cos{r}})} & \text{for}\,\,\, t\leq 1 \,,\\
   \displaystyle \cfrac{t-\cos{r}}{1+t^2-2t\cos{r}+ \sqrt{1+t^2-2t\cos{r}}}  & \text{for}\,\,\, t> 1\,.
  \end{cases}
\]
\medskip
By Proposition \ref{vpun}, the extremal case takes place when $t=1\,,$ and hence $|y|=|x|$ and $j_G(x,y)=\log \left( 1+\frac{|x-y|}{|x|} \right)=\log \left( 1+2\sin{\frac{r}{2}} \right)\,.$ Therefore $R=\log \left( 1+2\sin{\frac{r}{2}} \right)$ and the proof is complete. The sharpness follows from this proof.
\end{proof}


\begin{lem}\label{kv}
For all $ x\in\Rn\setminus\{0\}$ and $r\in (0,\pi]$
\[
B_k(x,r)\subset B_v(x,r),
\]
and the radius $r$ is best possible.
\end{lem}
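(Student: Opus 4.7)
The plan is to exploit the explicit formulas already set up: the quasihyperbolic distance on $G=\mathbb{R}^n\setminus\{0\}$ between $x$ and $y$ is
\[
k_G(x,y)=\sqrt{\varphi^2+\log^2\tfrac{|x|}{|y|}},
\]
where $\varphi=\measuredangle(x,0,y)\in[0,\pi]$, by (\ref{k}), while Proposition \ref{vpun} tells us that $B_v(x,r)$ is the open angular sector with vertex $0$ of half-aperture $r$ around the ray through $x$. Since $\partial G=\{0\}$, the supremum in the definition \eqref{v} of $v_G$ is attained at $z=0$, so $v_G(x,y)=\varphi$. The inclusion is then immediate.

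More precisely, for the first step I would simply observe that, for any $y\in G$,
\[
v_G(x,y)=\varphi\le\sqrt{\varphi^2+\log^2\tfrac{|x|}{|y|}}=k_G(x,y).
\]
Hence $k_G(x,y)<r$ forces $v_G(x,y)<r$, which is the claimed inclusion $B_k(x,r)\subset B_v(x,r)$.

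For the sharpness step I would exhibit, for every $r'>r$, a point $y\in B_k(x,r')$ that lies outside $B_v(x,r)$. The natural choice is to take $y$ on the sphere $|y|=|x|$, on a ray from $0$ making angle $\varphi\in[r,r')$ with the ray through $x$. Then $\log(|x|/|y|)=0$, so
\[
k_G(x,y)=\varphi<r',\qquad v_G(x,y)=\varphi\ge r,
\]
showing that $r$ cannot be replaced by any larger constant. Letting $\varphi\to r$ from above also shows that the inclusion $B_k(x,r)\subset B_v(x,r)$ cannot be enlarged to $B_v(x,r')$ with $r'<r$ on the left-hand side.

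I do not expect any real obstacle here: once the identification $v_G(x,y)=\varphi$ is recorded, the result is a one-line inequality plus the extremal configuration $|y|=|x|$, mirroring the pattern used in the proofs of Lemmas \ref{sv} and \ref{jv}.
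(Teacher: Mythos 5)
Your proof is correct and follows essentially the same route as the paper: both rest on Proposition \ref{vpun} (the $v$-ball is an angular domain with vertex $0$), the explicit formula \eqref{k}, and the extremal configuration $|y|=|x|$. Your packaging via the pointwise inequality $v_G(x,y)=\varphi\le\sqrt{\varphi^2+\log^2(|x|/|y|)}=k_G(x,y)$ is in fact a slightly cleaner way of stating what the paper obtains by minimizing $f(t)=\sqrt{r^2+\log^2 t}$ at $t=1$.
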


\begin{proof}
We assume by symmetry that $n=2\,,$ $x=t e_1\,$ and  $y=e_1,\, t>0\,.$ Now
\[
k_G(x,y)=\sqrt{r^2+\log^2{\frac{|x|}{|y|}}}=\sqrt{r^2+\log^2 t}=:f(t)\,,
\]
and
\[
 f'(t) =\frac{\log t}{t\sqrt{r^2+\log^2 t}}\,.
\]
By Proposition \ref{vpun}, the extremal case happens when $t=1\,,$ so $|y|=|x|$ and $k_G(x,y)=r$ and the proof is complete. The sharpness follows from the proof.
\end{proof}

\begin{lem}\label{nv}
For all $ x\in\Rn\setminus\{0\}$ and $r\in (0,\pi]$, we have
\[
\mathbb{B}^n(x,R)\subset B_v(x,r)\,,\quad R =
  \begin{cases}
  \displaystyle |x|\sin{r} & \text{for}\,\,\, r\in (0,{\pi}/{2}]\,, \\
   \displaystyle |x|     & \text{for}\,\,\, r\in ({\pi}/{2},\pi]\,,
  \end{cases}
\]

and the radius $R$ is best possible.
\end{lem}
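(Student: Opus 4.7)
The plan is to reduce the problem to a planar distance computation. By rotational symmetry about the line through $0$ and $x$ we may assume $n=2$ and $x=|x|e_1$. Proposition \ref{vpun} then identifies $B_v(x,r)$ with the open angular sector $S_r=\{y\in\mathbb{R}^2\setminus\{0\}:|\arg y|<r\}$, symmetric about the positive real axis with half-opening $r$. The largest Euclidean ball centered at $x$ that fits in $S_r$ has radius $R=\mathrm{dist}(x,\mathbb{R}^2\setminus S_r)$, so it suffices to compute this distance and exhibit an extremal point realizing it. The complement $\mathbb{R}^2\setminus S_r$ equals $\{0\}\cup L_+\cup L_-$, where $L_\pm=\{te^{\pm ir}:t\ge 0\}$ are the two boundary rays, and by the reflection $y\mapsto \bar y$ it suffices to work with $L_+$. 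The foot of the perpendicular from $x=|x|e_1$ to the line carrying $L_+$ is the point $|x|(\cos r)\,e^{ir}$, i.e.\ the parameter value $t=|x|\cos r$.

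If $r\in(0,\pi/2]$, then $|x|\cos r\ge 0$, so this foot lies on $L_+$ itself, giving $\mathrm{dist}(x,L_+)=|x|\sin r\le|x|=\mathrm{dist}(x,0)$. Hence $R=|x|\sin r$, and sharpness is immediate because the foot is a boundary point of $S_r$ at exactly this distance from $x$, so any ball $\mathbb{B}^n(x,R')$ with $R'>|x|\sin r$ meets $\partial S_r$. If $r\in(\pi/2,\pi]$, then $|x|\cos r<0$, so the perpendicular foot lies on the opposite ray and not on $L_+$; minimizing $|x-te^{ir}|^2=|x|^2+t^2-2|x|t\cos r$ over $t\ge 0$ is therefore attained at the endpoint $t=0$, giving $\mathrm{dist}(x,L_+)=|x|=\mathrm{dist}(x,0)$. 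Thus $R=|x|$, and sharpness follows since any ball $\mathbb{B}^n(x,R')$ with $R'>|x|$ contains the origin, which does not lie in $B_v(x,r)$.

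The only delicate point is the case split at $r=\pi/2$, where the two formulas agree $(|x|\sin(\pi/2)=|x|)$ and the extremal boundary point moves continuously from an interior point of a ray (for $r<\pi/2$) to the vertex $0$ (for $r>\pi/2$). I expect the main book-keeping to be this transition and the fact that although $0\notin B_v(x,r)$, it still governs the distance computation in the obtuse case; no genuine analytic obstacle should arise beyond these elementary planar considerations.
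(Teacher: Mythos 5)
Your proof is correct and follows essentially the same route as the paper: both identify $B_v(x,r)$ with the angular sector of Proposition \ref{vpun} and determine the Euclidean distance from $x$ to its boundary by elementary planar trigonometry (the paper invokes the law of sines where you use the perpendicular foot, with the same case split at $r=\pi/2$). The only slip is that $\mathbb{R}^2\setminus S_r$ is the full closed reflex sector rather than just $\{0\}\cup L_+\cup L_-$, but since the distance from a point to a closed set not containing it is attained on the boundary of that set, this does not affect the argument.
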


\begin{proof}
Fix $x\in\Rn\setminus\{0\}\,,$ and $y\in B_v(x,r)\,.$ We consider two cases. If $r\in ({\pi}/{2},\pi]$ then by Proposition \ref{vpun}, $|x-y|\leq |x|\,.$ If $r\in (0,{\pi}/{2}]$ then by the law of sines and Proposition \ref{vpun}, $|x-y|\leq |x|\sin{r}\,.$ The sharpness follows from the proof.
\end{proof}

\begin{lem}\label{qv}
For all $ x\in \Rn\setminus\{0\}$ and $r\in (0,\pi]$, we have
\[
B_q(x,R)\subset B_v(x,r)\,,\quad  R =\min\left\{ \frac{2|x|\sin{r/2}}{1+|x|^2},\frac{|x|}{\sqrt{1+|x|^2}} \right\}\,,
\]
and the radius $R$ is best possible.
\end{lem}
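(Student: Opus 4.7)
The plan is to follow the same template as the proofs of Lemmas \ref{sv}, \ref{jv} and \ref{kv}. By rotational symmetry about the origin, I would first reduce to $n = 2$ and $x = |x|\,e_1$ and set $a = |x|$. By Proposition \ref{vpun}, the infimum defining the best $R$ is realized when $y$ lies on the ray through the origin making the fixed angle $r$ with $[0, x]$; I would parametrize $y = te^{ir}$, $t > 0$, and consider the single-variable function
\[
f(t) \;=\; q(x, te^{ir}) \;=\; \frac{\sqrt{a^2 + t^2 - 2at\cos r}}{\sqrt{(1+a^2)(1+t^2)}}.
\]
The sought radius is $R = \inf_{t > 0} f(t)$.

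Next I would recognize the two terms of the stated $R$ as the values of $f$ at distinguished points. Evaluating at $t = a$ (so that $|y| = |x|$) and using the identity $1 - \cos r = 2\sin^2(r/2)$ gives $f(a) = 2a\sin(r/2)/(1+a^2)$, the first term. The limit $f(0^+) = a/\sqrt{1+a^2}$ recovers the second. Sharpness will then follow from these explicit extremizers: $y = |x|e^{ir}$ realizes the first bound, and a sequence $y_n = t_n e^{ir}$ with $t_n \to 0^+$ (which stays in $G = \mathbb{R}^n\setminus\{0\}$) realizes the second.

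The remaining and most delicate step is to verify that no interior critical point of $f$ contributes a value smaller than these two candidates. Differentiating $f^2$ and simplifying, $f'(t) = 0$ reduces to the quadratic
\[
a\cos r \cdot t^2 + (1 - a^2)\, t - a\cos r \;=\; 0,
\]
whose positive root can be written explicitly. The main obstacle is the case analysis: the signs of $1 - a^2$ and $\cos r$ control the location of this root, so I would split into the cases $r \le \pi/2$ versus $r > \pi/2$ and $|x| \le 1$ versus $|x| > 1$, and in each case compare the critical value of $f$ with $f(a)$ and $f(0^+)$ to conclude that $\inf_{t>0} f(t)$ equals the smaller of the two named candidates. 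Once this comparison is established, both the inclusion $B_q(x,R) \subset B_v(x,r)$ and the sharpness of $R$ follow directly from the extremizers identified in the previous step.
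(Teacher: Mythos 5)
Your reduction is sound and in fact more scrupulous than the paper's own argument: since the chordal metric is not scale-invariant, only the rotation to $x=|x|e_1$ is legitimate, and the parameter $a=|x|$ must be retained, as you do (the paper's proof silently sets $x=e_1$, i.e.\ $a=1$). The use of Proposition \ref{vpun} to reduce to minimizing $f(t)=q(x,te^{ir})$ over $t>0$, the values $f(a)=2a\sin(r/2)/(1+a^2)$ and $f(0^+)=a/\sqrt{1+a^2}$, and the critical-point equation $a\cos r\,t^2+(1-a^2)t-a\cos r=0$ are all correct.

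The step that fails is the concluding comparison. Substituting $t=a$ into your quadratic yields $a(a^2-1)(\cos r-1)$, so for $a\neq1$ and $r\in(0,\pi/2)$ the point $t=a$ is not critical; the unique positive root $t_*$ (the two roots have product $-1$) is a global minimum of $f$ on $(0,\infty)$ when $\cos r>0$, because the quadratic is negative on $(0,t_*)$ and positive on $(t_*,\infty)$. Hence $\inf_{t>0}f(t)=f(t_*)<f(a)$ strictly, and it is \emph{not} $\min\{f(a),f(0^+)\}$. For example, $a=2$, $r=\pi/3$ gives $t_*=(3+\sqrt{13})/2$ and $f(t_*)\approx0.373<0.4=f(a)=\min\{f(a),f(0^+)\}$, so the point $t_*e^{i\pi/3}$ lies in $B_q(2e_1,0.4)$ but not in $B_v(2e_1,\pi/3)$. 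Your case analysis therefore cannot close; what it actually uncovers is that the stated radius is correct only when $|x|=1$, which is the only case the paper's proof covers (there $t_*=1$, and for $r>\pi/2$ the infimum is the common endpoint value $1/\sqrt2$). Note also that your candidate list omits $\lim_{t\to\infty}f(t)=1/\sqrt{1+a^2}=q(x,\infty)$, which for $a>1$ and $r>\pi/2$ is smaller than both stated terms. A correct statement for general $x$ would have to take $R=\inf_{t>0}f(t)$, namely $f(t_*)$ when $\cos r>0$ and $\min\{a,1\}/\sqrt{1+a^2}$ when $\cos r\le0$.
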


\begin{proof}
Fix $x\in\Rn\setminus\{0\}\,,$ and $y\in B_v(x,r)\,.$ We consider two cases. If $r\in ({\pi}/{2},\pi]$ then $$q(x,y)\leq q(x,0)=\frac{|x|}{\sqrt{1+|x|^2}}\,.$$ If $r\in (0,{\pi}/{2}]$ we may assume that $x=e_1\,,$ and let $y=t e^{ir},\, t>0\,.$ Then by the law of sines
 \[
 q(x,y)= \frac{|x-y|}{\sqrt{1+|x|^2}\sqrt{1+|y|^2}}=\frac{\sqrt{1+t^2-2t\cos{r}}}{\sqrt{2}\sqrt{1+t^2}}=:f(t)\,,
 \]
and
\[
f'(t)=\frac{(t^2-1)\cos{r}}{\sqrt{2}(1+t^2)^{3/2}\sqrt{1+t^2-2t\cos{r}}}\,.
\]
The extremal case takes place when $t=1\,,$ therefore $|x|=|y|\,,$ and
\[
q(x,y)=\frac{|x-y|}{1+|x|^2}=\frac{2|x|\sin(r/2)}{1+|x|^2}\,.
\]
The proof is complete because by Proposition \ref{vpun}, $B_v(x,y)$ is an angular domain. The sharpness follows from the proof.
\end{proof}


\begin{lem}\label{pv}
For all $ x\in \Rn\setminus\{0\}$ and $r\in (0,\pi]$, we have
\[
B_p(x,R)\subset B_v(x,r)\,,\quad  R =\frac{\sin(r/2)}{\sqrt{\sin^2(r/2)+1}}\,,
\]
and the radius $R$ is best possible.
\end{lem}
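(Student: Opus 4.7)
The strategy follows the same template as Lemmas~\ref{sv}--\ref{qv}. By the rotational invariance of both $p_G$ and $v_G$ on $G = \mathbb{R}^n \setminus \{0\}$, I reduce to $n = 2$ and $x = e_1$. By Proposition~\ref{vpun}, $B_v(e_1, r)$ is the open angular sector at $0$ of half-opening $r$ about the positive real axis, so its boundary in $G$ is (up to reflection) the single ray $\{t e^{ir} : t > 0\}$. It therefore suffices to determine
$$R = \inf_{t > 0} p_G(e_1, t e^{ir})$$
and to verify that the infimum is attained.

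With $d(e_1) = 1$, $d(t e^{ir}) = t$, and $|e_1 - t e^{ir}|^2 = 1 - 2t\cos r + t^2$, set
$$f(t) = p_G(e_1, t e^{ir})^2 = \frac{A(t)}{A(t) + 4t}, \qquad A(t) = 1 - 2t\cos r + t^2.$$
A quotient-rule computation gives $f'(t) = 4\bigl(tA'(t) - A(t)\bigr)/(A(t) + 4t)^2$, and the cancellation $tA'(t) - A(t) = t^2 - 1$ is immediate. Hence $f$ attains its unique minimum at $t = 1$; since $A(1) = 2 - 2\cos r = 4\sin^2(r/2)$,
$$f(1) = \frac{4\sin^2(r/2)}{4\sin^2(r/2) + 4} = \frac{\sin^2(r/2)}{1 + \sin^2(r/2)},$$
whence $R = \sin(r/2)/\sqrt{1 + \sin^2(r/2)}$.

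To promote this boundary estimate to the global inclusion $B_p(x,R) \subset B_v(x,r)$, I observe that any $y \in G$ with $v_G(x,y) = r' \ge r$ lies on a ray from $0$ making angle $r'$ with the ray through $x$, so the same one-variable analysis on this ray yields $p_G(x,y) \ge \sin(r'/2)/\sqrt{1 + \sin^2(r'/2)} \ge R$, because the last expression is increasing in $r' \in (0,\pi]$. Sharpness is immediate: the point $y = e^{ir}$ satisfies both $v_G(e_1, y) = r$ and $p_G(e_1, y) = R$. I do not expect any real obstacle beyond the derivative cancellation, which is the same mechanism that produced the factor $t^2 - 1$ in Lemmas~\ref{sv} and~\ref{qv}.
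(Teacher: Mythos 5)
Your proposal is correct and follows essentially the same route as the paper: reduce to $n=2$, $x=e_1$, parametrize the boundary ray by $y=te^{ir}$, and show the one-variable function has its unique minimum at $t=1$, giving $R=\sin(r/2)/\sqrt{1+\sin^2(r/2)}$. The only (welcome) difference is that you differentiate $p^2$ instead of $p$ and spell out the monotonicity in the angle $r'$ that the paper leaves implicit via Proposition~\ref{vpun}.
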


\begin{proof}
By symmetry we may assume that $n=2\,$ and $x=e_1\,.$ Let $y=t e^{ir},\, t>0\,.$ By the definition
\[
p_G(x,y)=\frac{|x-y|}{\sqrt{|x-y|^2+4|x| |y|}}=\sqrt{\frac{1+t^2-2t\cos{r}}{1+t^2-2t\cos{r}+4t}}=:f(t)\,.
\]
By Proposition \ref{vpun}, we want to minimize $f(t)\,.$ Now
\[
f'(t)=\frac{2 (t^2-1)}{\sqrt{1 + t^2 - 2 t \cos{r}} (1 + t (4 + t) - 2 t \cos{r})^{3/2}}\,,
\]
and $f'(t)=0$ if and only if $t=1\,,$ so $|y|=|x|$ and $$p_G(x,y)=\frac{|x-y|}{\sqrt{|x-y|^2+4|x|^2}}=\frac{\sin(r/2)}{\sqrt{\sin^2(r/2)+1}}.$$ The sharpness follows from the proof.
\end{proof}

{\bf Proof of Theorem \ref{thm:main3}.} The proof follows from Lemmas \ref{sv} -- \ref{pv}.
\hfill $\square$
 \begin{thm}\label{vbeb}
For all $x \in \mathbb{H}^n$ and $r \in (0,\pi/2)$, we have

  \bequ\label{vb}
    B_v(x,r) &\subset & B^n(x+ (2T^2)x_n e_n , 2T (\sqrt{T^2+1})x_n)\\
    &\subset & B^n(x, (2T \sqrt{T^2+1}+2T^2)x_n )\,,\nonumber
  \eequ
  where $T = | \tan (\pi-r) |\,.$ Moreover, the smaller Euclidean ball is the smallest possible containing $B_v(x,r)\,.$

  Note that \eqref{vb} is equivalent to
  \bequu
    B_v(x,r) &\subset & B^n \left( x+ (2x_n \tan^2 r)  e_n, 2x_n \frac{\tan r}{\cos r}  \right)\\
 &\subset &  B^n \left( x, 2x_n\left( \frac{\tan r}{\cos r}+\tan^2 r\right)  \right)\,.
  \eequu

\end{thm}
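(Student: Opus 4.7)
My plan is to reduce to the planar case, parametrize $\partial B_v(x,r)$ by horocycles, and bound the Euclidean distance to the purported ball center via an explicit algebraic factorization.

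By invariance of $v_{\mathbb{H}^n}$ under horizontal translations, positive dilations, and rotations fixing the $e_n$-axis, it suffices to take $n=2$ and $x=(0,1)$, so $T=\tan r$. Set $P=(0,1+2T^2)$ and $R=2T\sqrt{T^2+1}$. The second inclusion in~\eqref{vb} then follows from the triangle inequality, and the equivalent reformulation uses $T\sqrt{T^2+1}=\tan r/\cos r$ and $T^2=\tan^2 r$. The task therefore reduces to proving $B_v(x,r)\subset\overline{B^2(P,R)}$.

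For each $c\in\mathbb{R}$ let $\mathcal{H}_c$ be the horocycle through $x$ tangent to $\partial\mathbb{H}^2$ at $z_c=(c,0)$, with radius $\rho_c=(c^2+1)/2$. As prepared at the end of Section~\ref{section4}, the inscribed angle theorem yields $v(x,y)=\arcsin(|x-y|/(2\rho_{\min}))$, where $\rho_{\min}$ is the smaller of the two horocyclic radii through $x$ and $y$. Solving $\measuredangle(x,z_c,y)=r$ on $\mathcal{H}_c$ gives the explicit candidate boundary curve
\[
y_+(c)=\bigl(\sin r[2c\sin r+(c^2-1)\cos r],\ 1+(c^2-1)\sin^2 r-2c\sin r\cos r\bigr),
\]
together with its horizontal reflection for the other half of $\partial B_v(x,r)$. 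Using the chord-length identity $y_+(c)_1^2+(y_+(c)_2-1)^2=\sin^2 r\,(c^2+1)^2$, a direct expansion produces the key factorization
\[
|y_+(c)-P|^2-R^2=\sin^2 r\,(c^2+2Tc-1)(c^2-2Tc+3).
\]
The first factor has roots $c_{\mathrm{top}}=-(1+\sin r)/\cos r$ and $c_{\mathrm{bot}}=(1-\sin r)/\cos r$, and substituting these values yields $y_+(c_{\mathrm{top}})=(0,\,1+2T^2+2T\sqrt{T^2+1})$ and $y_+(c_{\mathrm{bot}})=(0,\,1+2T^2-2T\sqrt{T^2+1})$, i.e.\ the top and bottom points of $B_v(x,r)$ on the vertical axis, both exactly at distance $R$ from $P$.

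The crux is to show that $y_+(c)\in\partial B_v(x,r)$ precisely for $c\in[c_{\mathrm{top}},c_{\mathrm{bot}}]$, which is the condition that $\mathcal{H}_c$ is the smaller of the two horocycles through $x$ and $y_+(c)$. Writing the other tangency parameter as $c'$ and applying Vieta to the quadratic $(1-b)t^2-2at+(a^2+b^2-b)=0$ satisfied by $c,c'$ (with $y_+(c)=(a,b)$), a half-angle manipulation reduces $\rho_c\le\rho_{c'}$ to this interval. On this interval the first factor is $\leq 0$, while the second factor $(c-T)^2+(3-T^2)$ is strictly positive: automatically for $r\leq\pi/3$, and for $r\in(\pi/3,\pi/2)$ by a short computation with $\sin r<1$ that yields $c_{\mathrm{bot}}<T-\sqrt{T^2-3}$. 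Hence $|y_+(c)-P|\leq R$ on $\partial B_v(x,r)$, with equality precisely at the two endpoint points on the vertical axis; since these are antipodal on $\partial B^2(P,R)$, the diameter of $B_v(x,r)$ equals $2R$ and $\overline{B^2(P,R)}$ is the smallest Euclidean ball containing it, establishing sharpness. The principal obstacle is the range identification; the factorization of $|y_+(c)-P|^2-R^2$ and the ensuing sign analysis are then routine.
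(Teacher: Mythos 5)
Your proposal is correct in substance but follows a genuinely different route from the paper. The paper starts from the explicit $\arctan$ formula \eqref{vH}, writes $\partial B_v(i,r)$ as a graph $y_1=f_2(y_2,r)$ over $y_2\in[b_1,b_2]$, and compares it with the circle $y_1=g_2(y_2,r)$ by first replacing the concave circular arc with the inscribed triangle through $(b_1,0)$, $(1,g_2(1,r))$, $(b_2,0)$ and then proving $T(y_2,r)-f_2(y_2,r)\ge 0$ via convexity in $y_2$ and a sequence of endpoint identities reducing to $\csc(\tfrac{\pi}{4}\pm\tfrac{r}{2})\sin(\tfrac{\pi}{4}\mp\tfrac{r}{2})$. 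You instead parametrize $\partial B_v(i,r)$ by the horocycle tangency point $c$ and obtain the exact factorization $|y_+(c)-P|^2-R^2=\sin^2 r\,(c^2+2Tc-1)(c^2-2Tc+3)$, which I have checked (as well as the chord identity, the endpoint values matching the paper's $b_1,b_2$, and the positivity of the second factor on the relevant range); this buys you a direct distance estimate with no chord approximation and far fewer trigonometric verifications, and the sharpness argument (two antipodal contact points on the vertical axis) coincides with the paper's observation that $g_2(b_1,r)=g_2(b_2,r)=0$. The one step you leave as a sketch is the one you flag as the crux: that $y_+(c)\in\partial B_v(i,r)$ exactly for $c\in[c_{\mathrm{top}},c_{\mathrm{bot}}]$. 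This is true and your Vieta route works, but it deserves to be written out; the cleanest way is to note that the two tangency parameters $c,c'$ of the horocycles through $x=(0,1)$ and $y=(a,b)$ satisfy $(1-b)t^2-2at+(a^2+b^2-b)=0$ with discriminant $4b\,|x-y|^2>0$, so $\rho_c=\rho_{c'}$ can occur only when $a=0$, and along your curve
\[
a(c)=\sin r\cos r\,(c^2+2Tc-1)\,,\qquad b(c)=(c\sin r-\cos r)^2\,,
\]
so the switching locus of the minimality condition is precisely the zero set of the first factor in your factorization; one interior check (e.g.\ $c=0$) then pins down the interval. With that detail supplied, your argument is a complete and arguably cleaner proof of the theorem.
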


\begin{figure}[h]
\begin{center}
     \includegraphics[width=10cm]{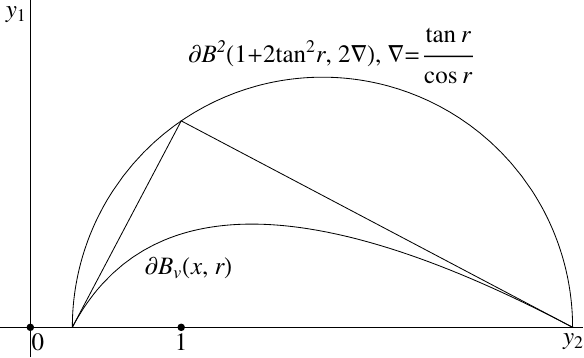}
\caption{ Proof of Theorem \ref{vbeb}.}
\end{center}
\end{figure}

\begin{proof}
It suffices to consider the case $n=2\,.$
For the first inclusion, let us fix $x=i\,.$ We claim that
\[
  B_v(i,r) \subset B^2 \left((1+2 \tan^2 r)i, 2 \frac{\tan r}{\cos r}  \right)\,.
\]
By \eqref{vH}, we have
\[
v_{\UH}(x,y) \equiv \pi-\arctan\left(\frac{2 \sqrt{y_2}\cdot \sqrt{y_1^2+(y_2-1)^2} \mp y_1(1+y_2)}{y_1^2-4 y_2}\right) \quad (\textrm{mod\,} \pi)\,.
\]
Writing $v_{\UH}(i,y)=r\,,$ we conclude that
\beq\label{f12}
y_1=\begin{cases}
\displaystyle\cot{r}(1+y_2-2\sqrt{y_2}\sec{r})=:f_1(y_2,r)\,, \\ 
\displaystyle\cot{r}(-1-y_2+2\sqrt{y_2}\sec{r})=:f_2(y_2,r)\,. \end{cases}
\eeq
This gives the equation of $\partial B_v(i,r)\,,$ for $y_2\in [b_1,b_2]\,.$  

Letting $f_1(y_2,r)=f_2(y_2,r)=0\,,$ we see that
\beq\label{b12}
y_2=\begin{cases}
\displaystyle 2(1-\sin{r})\sec^2{r}-1=:b_1\,, \\
\displaystyle 2(1+\sin{r})\sec^2{r}-1=:b_2\,. \end{cases} 
\eeq
Our next goal is to find the equation of $\partial B^2 \left((1+2 \tan^2 r)i, 2 \displaystyle \frac{\tan r}{\cos r}  \right)\,.$ Taking $ |y-(1+2 \tan^2 r)|= 2 \displaystyle \frac{\tan r}{\cos r}\,,$ gives
\[
y_1=\begin{cases}
\displaystyle -\sqrt{4y_2\sec^2{r}-(1+y_2)^2}=:g_1(y_2,r)\,, \\
\displaystyle \sqrt{4y_2\sec^2{r}-(1+y_2)^2}=:g_2(y_2,r)\,. \end{cases} 
\]
By symmetry, it is sufficient to show that $g_2(y_2,r)\geq f_2(y_2,r)\,.$ In order to prove this inequality, it is convenient to estimate the circular arc $g_2(y_2,r)\,,$ by a triangle with vertices $b_1, b_2$ and $(1,g_2(1,r))\,.$ We can do this estimation because
\[\frac{\partial^2}{\partial{y_2}^2}(g_2(y_2,r))=\frac{-4\sec^2{r}\tan^2{r}}{\left(4y_2\sec^2{r}-(1+y_2)^2\right)^{3/2}}\leq 0\,.
\]
Denote the above mentioned triangle by $T(y_2,r)$
\beq\label{l12}
T(y_2,r)=\begin{cases}
\displaystyle y_1=\frac{g_2(1,r)}{1-b_1}(y_2-b_1)=:l_1(y_2,r)\,,\quad b_1\leq y_2< 1\,, \\
\displaystyle y_1=\frac{g_2(1,r)}{1-b_2}(y_2-b_2)=:l_2(y_2,r)\,,\quad 1\leq y_2\leq b_2\,. \end{cases}
\eeq
We only need to show that $h(y_2,r)=T(y_2,r)-f_2(y_2,r)\geq 0\,.$

If $b_1\leq y_2 <1\,,$ then $$h(y_2,r)=\frac{1}{\cos{r}}(y_2-1+(y_2+1)\sin{r})-\cot{r}(2\sqrt{y_2}\sec{r}-1-y_2)\,.$$ But $$\displaystyle \frac{\partial^2(h(y_2,r))}{\partial y_2^2}=\frac{\csc{r}}{2y_2^{3/2}}>0\,,\,\, \text{for}\,\,\, \displaystyle 0\leq r\leq \pi/2\,.$$ Hence $h'(y_2,r)$ is an increasing function of $y_2\,.$
Moreover we claim that indeed $$h'(b_1,r)=\cot r + \sec r + \tan r-\frac{\csc r}{\sqrt{-1+2/(1+\sin r)}}=0\,.$$ To see this, it is sufficient to make the following observation
\[
\sqrt{\frac{1-\sin r}{1+\sin r}}=\frac{\csc r}{\cot r + \sec r + \tan r}\,.
\]
Now it is easy to check that for $0\leq r\leq \pi/2\,,$ both sides are equivalent to $\csc \left(\frac{\pi}{4}+\frac{r}{2}\right) \sin\left(\frac{\pi}{4}-\frac{r}{2}\right)\,.$ Thus $h'(y_2,r)\geq 0\,.$

Similarly we can show that
\[
h(b_1,r)=2 \sec r \left(\csc r-1-\cot
   r\sqrt{\frac{1-\sin r}{1+\sin r}} \right)=0\,.
\]
To see this, it suffices to show that
\[
\frac{1-\sin r} {\cos r}=\sqrt{\frac{1-\sin r}{1+\sin r}}\,.
\]
It follows easily that for $0\leq r\leq \pi/2\,,$ both sides are equivalent to
$$\csc \left(\frac{\pi}{4}+\frac{r}{2}\right) \sin\left(\frac{\pi}{4}-\frac{r}{2}\right)\,.$$
Hence $h(y_2,r)\geq 0\,.$
In the same manner for $1\leq y_2\leq b_2\,,$
\[
h(y_2,r)=(1+y_2+\csc r - y_2 \csc r)\tan r- \cot r(-1-y_2+2\sec r \sqrt{y_2})\,.
\]
But $$\frac{\partial^2(h(y_2,r))}{\partial y_2^2}=\frac{\csc{r}}{2y_2^{3/2}}>0\,,\,\,\text{for}\,\,\, 0\leq r\leq \pi/2\,,$$ and
\[
h'(b_2,r)=\tan r+\cot r-\sec r-\frac{\csc r}{\sqrt{2 \sec
   ^2r+2 \tan r \sec r-1}}=0\,.
\]
To see this, it is enough to show that
\[
\sqrt{2 \sec ^2 r+2 \tan r \sec r-1}=\frac{\csc r}{\tan r+\cot r-\sec r}\,,
\]
and it is easy to check that for $0\leq r\leq \pi/2\,,$ both sides are equivalent to
$\csc \left(\frac{\pi}{4}-\frac{r}{2}\right) \sin\left(\frac{\pi}{4}+\frac{r}{2}\right)\,.$
Therefore $h'(y_2,r)\geq 0\,.$ We next show that
\[
h(b_2,r)=2 \sec r \left(\csc r-\cot r \sqrt{2 \sec r
   (\tan r+\sec r)-1}+1\right)=0\,.
\]
To see this we need to show that
\[
\sqrt{2 \sec r
   (\tan r+\sec r)-1}=(1+\csc r)\tan r\,.
\]
In the same manner as in the previous part, we can show that for $0\leq r\leq \pi/2\,,$ both sides are equivalent to
$\csc \left(\frac{\pi}{4}-\frac{r}{2}\right) \sin\left(\frac{\pi}{4}+\frac{r}{2}\right)\,.$
Hence $h(y_2,r)\geq 0\,.$

An easy computation shows that $g_2(b_1,r)
=g_2(b_2,r)=0\,.$ Hence the Euclidean ball $ B^2 \left((1+2 \tan^2 r)i, 2\displaystyle \frac{\tan r}{\cos r}  \right)$ is the smallest possible ball containing $B_v(x,r)\,,$ and this completes the proof for the first inclusion.

For the second inclusion, let
\[
y\in \partial  B^2 \left( x+ (2x_2 \tan^2 r)  e_2, 2 \frac{\tan r}{\cos r} x_2 \right)\,.
\]
It follows that
\[
|y- x_1e_1 - (1+2 \tan^2 r) x_2 e_2 |\leq 2 \frac{\tan r}{\cos r} x_2 \,.
\]
Therefore
\bequu
|y- x| &\leq & |y- x_1e_1 - (1+2 \tan^2 r) x_2 e_2 |+|-x+x_1e_1 + (1+2 \tan^2 r) x_2 e_2 |\\
&\leq & 2x_2 (\frac{\tan r}{\cos r}  + \tan^2 r)\,,
\eequu
and $y\in \partial B^2\left(x,2x_2 ( \displaystyle\frac{\tan r}{\cos r}  + \tan^2 r)\right)\,.$
\end{proof}

\begin{figure}[h]

\begin{center}

     \includegraphics[width=10cm]{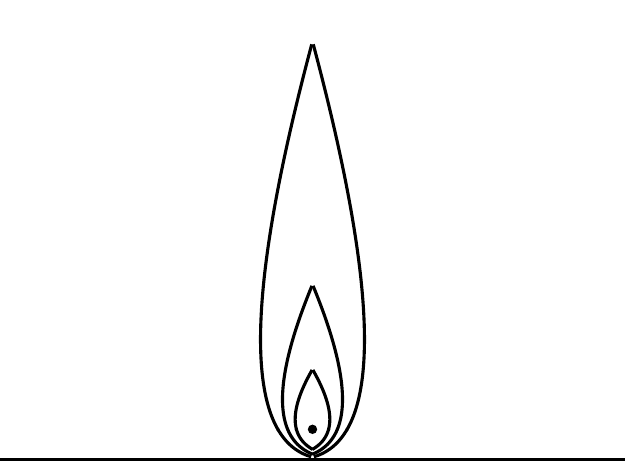}

\caption{ $\partial B_v(i,r)\,,$ in the upper half space, for $r= \pi/6, \pi/4, \pi/3\,.$  }

\end{center}

\end{figure}
\begin{rem}\label{rmk5.18}
By the proof of Theorem \ref{vbeb} we observe that $\partial B_v(x,r)$ is not smooth for $0\leq r\leq \pi\,.$ By \eqref{f12},
\[
f_2'(b_1^{+},r)=\cot r \left(\frac{\sec r}{\sqrt{2 \sec ^2r-2 \tan r
   \sec r-1}}-1\right)\,,
\]
and
\[
f_1'(b_1^{+},r)=-\cot r \left(\frac{\sec r}{\sqrt{2 \sec ^2r-2 \tan r
   \sec r-1}}-1\right)\,.
\]
Hence we see that at the point $y_2=b_1\,,$ the derivative does not exist and $\partial B_v(x,r)$ is not smooth.
\end{rem}

\begin{rem}
The proof of Theorem \ref{vbeb} gives more, namely, the lines $l_1(y_2,r)$ and $l_2(y_2,r)$ are tangent to the $\partial B_v(x,r)\,.$ To see this we first compute the slope of the tangent lines to $B_v(x,r)$ at the points $b_1, b_2\,.$ We have by Remark \ref{rmk5.18}
\[
f_2'(b_1^{+},r)=\cot r\left(\frac{\sec r}{\sqrt{2 \sec ^2r-2
   \tan r \sec r-1}}-1\right).
\]
Next, by \eqref{l12}, the slope of the line $l_1$ is $$\displaystyle m_1=\frac{\tan r}{-\sec ^2r+\tan r \sec r+1}\,.$$ We claim that $m_1=f_2'(b_1^{+},r)\,.$ To see this it is enough to show that
\[
\frac{\tan^2 r+1-\sec^2 r+\sec r\tan r}{\sec r}=\frac{-\sec ^2r+\tan r \sec r+1}{\sqrt{2 \sec ^2r-2
   \tan r \sec r-1}}\,.
\]
It is easy to check that for $0\leq r\leq \pi/2\,,$ both sides are equivalent to $$\displaystyle\cos\left(\frac{r}{2}\right) \csc\left(\frac{\pi}{4}-\frac{r}{2}\right) \csc\left(\frac{\pi}{4}+\frac{r}{2}\right) \sin\left(\frac{r}{2}\right)\,.$$

Similarly, substituting $b_2$ from \eqref{b12}, gives
\[
f_2'(b_2^{+},r)=\cot r \left(\frac{\sec r}{\sqrt{2 \sec ^2r+2 \tan r
   \sec r-1}}-1\right)\,.
\]
Next, by \eqref{l12}, the slope of line $l_2$ is $$m_2=-\frac{\tan r}{\sec ^2r+\tan r \sec r-1}\,.$$ We claim that $m_2=f_2'(b_2^{+},r)\,.$ To see this, it is enough to show that
\[
\frac{-\tan^2 r-1+\sec^2 r+\sec r\tan r}{\sec r}=\frac{\sec ^2r+\tan r \sec r-1}{\sqrt{2 \sec ^2r+2 \tan r
   \sec r-1}}\,.
\]
It is easy to check that for $0\leq r\leq \pi/2\,,$ both sides are equivalent to $$\cos\left(\frac{r}{2}\right) \csc\left(\frac{\pi}{4}-\frac{r}{2}\right) \csc\left(\frac{\pi}{4}+\frac{r}{2}\right) \sin\left(\frac{r}{2}\right)\,.$$
\end{rem}

\begin{thm}\label{vbeb2}
For all $x \in \mathbb{H}^n$ and $r \in (0,\pi/2)$, we have

  \[
    B^n(x+(\sec^2 r-1) x_n e_n, (\tan r) x_n)\subset B_v(x,r)\,.
  \]
  \end{thm}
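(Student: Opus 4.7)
The plan is to first reduce to the two-dimensional case $n=2$, $x=i$ using the rotation/translation symmetry of $\mathbb{H}^n$ in the first $n-1$ coordinates together with the scale invariance $y\mapsto y/x_n$; visual angles are preserved, and both sides of the inclusion scale appropriately. Writing $a=\sec r$ and $b=\tan r$ (so that $a^2-b^2=1$), the claim becomes $E:=B^2(a^2 i, b)\subset B_v(i,r)$. I would then invoke the explicit description of $\partial B_v(i,r)$ already worked out in the proof of Theorem \ref{vbeb}, namely formula \eqref{f12}: the boundary consists of the two arcs $y_1=\pm f_2(y_2,r)$ with $f_2(y_2,r)=(1/b)(2a\sqrt{y_2}-1-y_2)$ for $y_2\in[b_1,b_2]$.

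A guiding geometric observation makes the candidate ball $E$ natural: $f_2(\cdot,r)$ has maximum value $b$, attained precisely at $y_2=a^2$, so the widest points of $B_v(i,r)$ are $(\pm b, a^2)$, and these coincide with the rightmost and leftmost points of $\partial E$ where the tangent to $\partial E$ is vertical. Hence $E$ and $B_v(i,r)$ are tangent at $(\pm b, a^2)$; moreover, using \eqref{b12} one checks $b_1\leq a^2-b$ and $a^2+b\leq b_2$, so the vertical extent of $E$ fits inside the domain of definition of $f_2$.

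Using the reflection symmetry $y_1\mapsto -y_1$ shared by $E$ and $B_v(i,r)$, the containment reduces to verifying $y_1\leq f_2(y_2,r)$ along the right half of $\partial E$, parametrized as $y_1=b\cos\theta$, $y_2=a^2+b\sin\theta$ for $\theta\in[-\pi/2,\pi/2]$. Substituting, writing $t=\sqrt{y_2}$, and exploiting both $a^2-1=b^2$ and $t^2-a^2=b\sin\theta$, routine algebra converts the inequality first into $(t-a)^2\leq b^2(1-\cos\theta)$, and then, after using $(t-a)(t+a)=b\sin\theta$ together with $\sin^2\theta=(1-\cos\theta)(1+\cos\theta)$, into the equivalent statement $(t+a)^2\geq 1+\cos\theta$.

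The main (though still elementary) obstacle will be verifying this last trigonometric inequality uniformly in $\theta$. Expanding $(t+a)^2=2a^2+b\sin\theta+2at$ and using $2a^2-1=1+2b^2$, the inequality rearranges to $1+2b^2+2at\geq \cos\theta-b\sin\theta$. The right-hand side is bounded above by $\sqrt{1+b^2}=a$ by Cauchy--Schwarz, while $1+2b^2\geq a$ follows from $(1+2b^2)^2-a^2=3b^2+4b^4\geq 0$, and $2at\geq 0$ trivially, so the inequality holds and the proof closes.
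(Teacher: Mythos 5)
Your proof is correct, and its skeleton coincides with the paper's: reduce to $n=2$ and $x=i$, describe $\partial B_v(i,r)$ by the explicit formula \eqref{f12}, and use the $y_1\mapsto -y_1$ symmetry to reduce the inclusion to a comparison of the right boundary arc of the candidate disk with the curve $y_1=f_2(y_2,r)$. Where you genuinely diverge is in the final verification. The paper sets $h=f_2^2-g_2^2$ as a function of $y_2$, proves $\partial^2 h/\partial y_2^2\geq 0$ via an auxiliary convexity argument for $h_2(y_2,r)=2y_2^{3/2}+\cos r-3y_2\cos r$, and concludes from the vanishing of $\partial h/\partial y_2$ at the critical point $y_2=\sec^2 r$ that $h\geq 0$; this leaves implicit the (true) fact that $h(\sec^2 r,r)=0$. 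You instead parametrize $\partial E$ by the angle $\theta$, substitute $t=\sqrt{y_2}$, and after the factorization $(t-a)(t+a)=b\sin\theta$ reduce everything to $(t+a)^2\geq 1+\cos\theta$, which follows from $\cos\theta-b\sin\theta\leq\sqrt{1+b^2}=a\leq 1+2b^2$. This avoids second derivatives entirely, makes the tangency at $(\pm b,a^2)$ (equivalently $y_2=\sec^2 r$) explicit, and is self-contained modulo the two small checks you already include ($[a^2-b,a^2+b]\subset[b_1,b_2]$ and the slice-by-slice reduction to the boundary of $E$, which is justified since each horizontal slice of $E$ is an interval symmetric about $y_1=0$). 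The trade-off is that your route is specific to the circle's angular parametrization, whereas the paper's convexity-of-$h$ template is the same one it reuses for the outer bound in Theorem \ref{vbeb}.
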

\begin{proof}
By symmetry, it suffices to consider the case $n=2\,.$

Let us fix $x=i\,.$ We claim that
\[
  B^2(i\sec^2 r , \tan r)\subset B_v(x,r)\,.
\]
By \eqref{f12}, the equation of $\partial B_v(i,r)\,$ is as follows:
\[
y_1=\begin{cases}
\displaystyle \cot{r}(1+y_2-2\sqrt{y_2}\sec{r})=:f_1(y_2,r)\,, \\
\displaystyle \cot{r}(-1-y_2+2\sqrt{y_2}\sec{r})=:f_2(y_2,r)\,. \end{cases} 
\]

Next we find the equation of $\partial B^2(i\sec^2 r, \tan r)\,.$ Taking $|y-\sec^2 r|= \tan r$ gives
\[
y_1=\begin{cases}
\displaystyle -\sqrt{-(y_2 - \sec^2 r)^2 + \tan^2 r}=:g_1(y_2,r)\,, \\
\displaystyle \sqrt{-(y_2 - \sec^2 r)^2 + \tan^2 r}=:g_2(y_2,r)\,. \end{cases} 
\]
By symmetry, it is sufficient to show that $f_2(y_2,r)\geq g_2(y_2,r)\,.$ To see this we only need to show that $\displaystyle h(y_2,r)=:f_2(y_2,r)^2-g_2(y_2,r)^2 \geq 0\,.$ But $$\displaystyle \frac{\partial^2 h(y_2,r)}{\partial{y_2^2}}=\frac{1}{y_2^{3/2}}(2 y_2^{3/2} + \cos r - 3 y_2 \cos r) \csc^2 r\,.$$ Denote by $$\displaystyle h_2(y_2,r)=2 y_2^{3/2} + \cos r - 3 y_2 \cos r\,.$$ But 
\[
\frac{\partial^2 h_2(y_2,r)}{\partial{y_2^2}}=\frac{3}{2\sqrt{y_2}}> 0\,,\,\,\text{and}\quad \frac{\partial h_2(\cos^2 r,r)}{\partial{y_2}} =0\,.  
\]
 Moreover $h_2(\cos^2 r, r)>0$ for $r\in (0,\pi/2)\,.$ Therefore $\frac{\partial^2 h(y_2,r)}{\partial{y_2^2}}\geq 0.$ It is easy to check that $\frac{\partial h(\sec^2 r,r)}{\partial{y_2}}=0\,,$ and hence $h(y_2,r)\geq 0\,.$
\end{proof}

\begin{thm}\label{vbeb3}
For all $x \in \Hn\,$ and $r \in (0,\pi/2)$, we have
  \beq\label{bv}
    B^n(x,x_n \sin r) \subset B_v(x,r)\,.
  \eeq
\end{thm}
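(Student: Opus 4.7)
The approach is to unpack the definition $v_{\mathbb{H}^n}(x,y) = \sup_{z \in \partial \mathbb{H}^n} \measuredangle(x,z,y)$ and show that, whenever $|x-y| < x_n \sin r$, the angle $\theta := \measuredangle(x,z,y)$ satisfies $\theta < r$ with a bound uniform in $z$. Taking the supremum then gives $v_{\mathbb{H}^n}(x,y) < r$, that is, $y \in B_v(x,r)$.

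For the main estimate I would use two elementary geometric facts. First, since $x_n = d(x,\partial\mathbb{H}^n)$, we have $|x-z| \ge x_n$ for every $z \in \partial\mathbb{H}^n$. Second, the perpendicular distance from $x$ to the line through $z$ and $y$ equals $|x-z|\sin\theta$ (where $\theta \in [0,\pi]$) and is bounded above by $|x-y|$ since $y$ lies on that line. Combining these,
\[
 \sin\theta \;\le\; \frac{|x-y|}{|x-z|} \;\le\; \frac{|x-y|}{x_n} \;<\; \sin r,
\]
and the right-hand side is independent of $z$.

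The main subtlety (and the only nontrivial point of the argument) is that $\sin$ is not monotone on $[0,\pi]$, so $\sin\theta < \sin r$ does not by itself force $\theta < r$. I would rule out $\theta \in [\pi/2,\pi]$ by a longest-side argument: if $\theta \ge \pi/2$, then $\theta$ is the largest angle of the triangle with vertices $x,z,y$ (the other two angles would sum to at most $\pi/2$), so the opposite side $[x,y]$ would be the longest, giving $|x-y| \ge |x-z| \ge x_n$, contradicting $|x-y| < x_n \sin r < x_n$. Hence $\theta \in [0,\pi/2)$, and since $\sin$ is strictly increasing on this interval and $r \in (0,\pi/2)$, the inequality $\sin\theta < \sin r$ yields $\theta < r$. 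Taking the supremum over $z \in \partial\mathbb{H}^n$ then gives $v_{\mathbb{H}^n}(x,y) \le \arcsin(|x-y|/x_n) < r$, as required.
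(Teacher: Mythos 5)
Your proof is correct, but it takes a genuinely different route from the paper. The paper's proof is a two-line reduction: since $B_x = B^n(x,x_n) \subset \mathbb{H}^n$, domain monotonicity of the visual angle metric gives $v_{\mathbb{H}^n}(x,y) \le v_{B_x}(x,y)$, and the latter is bounded by $\arcsin(|x-y|/x_n)$ by the cited result \cite[3.3]{klvw} on $v$-balls inside a Euclidean ball. You instead argue directly from the definition: for each $z \in \partial\mathbb{H}^n$ the perpendicular distance from $x$ to the line through $z$ and $y$ is $|x-z|\sin\measuredangle(x,z,y) \le |x-y|$, and $|x-z| \ge x_n$, giving $\sin\theta \le |x-y|/x_n < \sin r$; your longest-side argument correctly disposes of the case $\theta \ge \pi/2$ (which would force $|x-y| \ge |x-z| \ge x_n$, impossible), so that $\sin\theta < \sin r$ really does yield $\theta < r$, and the bound is uniform in $z$. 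This is sound — it also covers the degenerate collinear case, since $\theta=\pi$ likewise forces $|x-y|\ge|x-z|$. What the paper's argument buys is brevity and reuse of an established lemma; what yours buys is self-containedness (no appeal to \cite{klvw}) and the explicit estimate $v_{\mathbb{H}^n}(x,y) \le \arcsin(|x-y|/x_n)$, which is in effect a direct proof of the special case of that lemma needed here.
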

\begin{proof}
Let $\lambda = \sin r$ and $y\in  B^n(x,\lambda x_n)\,.$ By domain monotonicity of the $v$-metric, we have for $B_x=B^n(x, x_n)$ the inequalities
  \[
    v_{\Hn}(x,y)\leq v_{B_x}(x,y)\leq \arcsin{\lambda}\,,
  \]
  where the last inequality follows by \cite[3.3]{klvw}.
\end{proof}


{\bf Proof of Theorem \ref{thm:main4}.} The proof follows from Theorems \ref{vbeb}, \ref{vbeb2} and \ref{vbeb3}.
\hfill
$\square$


\bigskip
{\bf Acknowledgements.} The research of the first author was supported by UTUGS, The Graduate School of the University of Turku.




\begin{thebibliography}{VVVV}

\bibitem[AVV]{avv}
{\sc  G. D. Anderson, M. K. Vamanamurthy, and M. Vuorinen,} Conformal
invariants, inequalities and quasiconformal maps. J. Wiley, 1997.

\bibitem[B1]{b1}
 {\sc A. F. Beardon,} The geometry of discrete groups. Graduate texts in
math., Vol. 91, Springer-Verlag, New York, 1983.

\bibitem[B2]{b}
{\sc A. F. Beardon,}
The Apollonian metric of a domain in $\mathbb{R}^n\,.$ Quasiconformal mappings and analysis (Ann Arbor, MI, 1995), 91--108, Springer, New York, 1998.




\bibitem[CHKV]{chkv}{\sc J. Chen, P. Hariri, R. Kl\'en, and M. Vuorinen,}
Lipschitz conditions, triangular ratio metric, and quasiconformal maps.
Ann. Acad. Sci. Fenn. 40 (2015), 683--709.

\bibitem[GH]{gh}{\sc F. W. Gehring and K. Hag,} The Ubiquitous Quasidisk, Mathematical Surveys
and Monographs 184, Amer. Math. Soc., Providence, RI, 2012.

 \bibitem[GP]{gp}{\sc  F. W.
  Gehring and  B. P. Palka,} Quasiconformally homogeneous domains. J. Analyse Math. 30 (1976), 172--199.

\bibitem[HIMPS]{himps}
{\sc  P. H\"ast\"o, Z. Ibragimov, D. Minda, S. Ponnusamy and S. K. Sahoo,}
{ Isometries of some hyperbolic-type path metrics, and the hyperbolic
medial axis},
  In the tradition of Ahlfors-Bers, IV,  Contemporary Math. 432 (2007),
63--74.

\bibitem[HMM]{hmm}
{\sc D. A.
 Herron, W. Ma, and D. Minda,} M\"obius invariant metrics bilipschitz equivalent to the hyperbolic metric. Conform. Geom. Dyn. 12 (2008), 67--96.

\bibitem[HKLV]{hklv}
{\sc  S. Hokuni, R. Kl\'en, Y. Li, and M. Vuorinen,} Balls in the triangular ratio metric.  Proceedings of an international conference,
Complex Analysis and Dynamical Systems VI,{Contemp. Math. Volume 667, 2016, 105--123}.

\bibitem[HPS]{hps}{\sc P. H\"ast\"o, S. Ponnusamy, and S. K. Sahoo,} Inequalities and geometry of the Apollonian and related metrics. Rev. Roumaine Math. Pures Appl. 51 (2006), 433--452.

\bibitem[KL]{kl}
{\sc L.
Keen and N. Lakic, }
Hyperbolic geometry from a local viewpoint.
London Mathematical Society Student Texts, 68. Cambridge University Press, Cambridge, 2007. x+271 pp.


\bibitem[K1]{k2}
{\sc R. Klén,} Local convexity properties of quasihyperbolic balls in punctured space. J. Math. Anal. Appl. 342 (2008), 192--201.

\bibitem[K2]{k3}
{\sc R. Klén,}
Close-to-convexity of quasihyperbolic and j-metric balls. Ann. Acad. Sci. Fenn. Math. 35 (2010), 493--501.

\bibitem[K3]{k4}
{\sc R. Klén,} Local convexity properties of balls in Apollonian and Seittenranta's metrics. Conform. Geom. Dyn. 17 (2013), 133--144.

\bibitem[KLVW]{klvw}
{\sc  R. Kl\'en,  H. Lind\'en, M. Vuorinen, and G. Wang,} {The
visual angle metric and M\"obius transformations}.  Comput. Methods
Funct. Theory 14 (2014), 577--608.



\bibitem[KMS]{kms}
{\sc R. Kl\'en, M. R. Mohapatra, and S. K. Sahoo,}
{Geometric properties of the Cassinian metric}. Math. Nachr. 290 (2017), 1531--1543. 


\bibitem[KRT]{krt}
{\sc R. Klén, A. Rasila, and J. Talponen,} Quasihyperbolic geometry in Euclidean and Banach spaces. J. Anal. 18 (2010), 261--278.

\bibitem[KV1]{kv1}
{\sc R. Klén and M. Vuorinen,} Inclusion relations of hyperbolic type metric balls. Publ. Math. Debrecen 81 (2012), 289--311.

\bibitem[KV2]{kv2}
{\sc R. Klén and M. Vuorinen,} Inclusion relations of hyperbolic type metric balls II. Publ. Math. Debrecen 83 (2013), 21--42.

\bibitem[S]{s}
{\sc P. Seittenranta,} M\"obius-invariant metrics. Math. Proc. Cambridge Philos. Soc. 125 (1999), 511--533.

%
%
%
%
%
%
%
%
%

%
%
%
%
%
%
%
%
\bibitem[Vu1]{vu0} {\sc  M. Vuorinen,}  Conformal invariants and quasiregular mappings. J. Anal. Math. 45 (1985), 69-115.
\bibitem[Vu2]{vu}
{\sc  M. Vuorinen,} Conformal geometry and quasiregular mappings. Lecture
notes in math. 1319, Springer-Verlag, Berlin, 1988.
\bibitem[Vu3]{vu1}
{\sc  M. Vuorinen,} Geometry of Metrics.
Proc. ICM2010 Satellite Conf. International Workshop on Harmonic
and Quasiconformal Mappings (HMQ2010), eds. D. Minda, S. Ponnusamy, N.
Shanmugalingam, J. Analysis 18 (2010), 399--424, ISSN 0971-3611. http://arxiv.org/abs/1101.4293.

\end{thebibliography}
\end{document}